\documentclass[reqno]{amsart}
\usepackage{amsmath,amssymb}
\usepackage{hyperref}

\newcommand*{\mailto}[1]{\href{mailto:#1}{\nolinkurl{#1}}}

\newtheorem{theorem}{Theorem}[section]
\newtheorem{proposition}[theorem]{Proposition}
\newtheorem{definition}[theorem]{Definition}
\newtheorem{lemma}[theorem]{Lemma}
\newtheorem{corollary}[theorem]{Corollary}
\theoremstyle{definition}

\newtheorem{remark}[theorem]{Remark}


\numberwithin{equation}{section}


\begin{document}

\title[Perturbation Theory for Sturm--Liouville Operators]{Relative oscillation theory and essential spectra of Sturm--Liouville operators}

\author[J. Behrndt]{Jussi Behrndt}
\address{Technische Universit\"{a}t Graz\\
Institut f\"ur Angewandte Mathematik\\
Steyrergasse 30\\
8010 Graz\\ Austria}
\email{\mailto{behrndt@tugraz.at}}
\urladdr{\url{https://www.applied.math.tugraz.at/~behrndt/}}

\author[P. Schmitz]{Philipp Schmitz}
\address{Department of Mathematics\\
	Technische Universit\"at Ilmenau\\ Postfach 100565\\
	98648 Ilmenau\\ Germany}
\email{\mailto{philipp.schmitz@tu-ilmenau.de}}
\urladdr{\url{https://www.tu-ilmenau.de/obc/team/philipp-schmitz}}

\author[G. Teschl]{Gerald Teschl}
\address{Faculty of Mathematics\\ University of Vienna\\
Oskar-Morgenstern-Platz 1\\ 1090 Wien\\ Austria}
\email{\mailto{Gerald.Teschl@univie.ac.at}}
\urladdr{\url{https://www.mat.univie.ac.at/~gerald/}}

\author[C. Trunk]{Carsten Trunk}
\address{Department of Mathematics\\
Technische Universit\"at Ilmenau\\ Postfach 100565\\
98648 Ilmenau\\ Germany}
\email{\mailto{carsten.trunk@tu-ilmenau.de}}
\urladdr{\url{https://www.tu-ilmenau.de/funktionalanalysis}}

\subjclass[2020]{Primary 34L05, 81Q10; Secondary 34L40, 47E05}
\keywords{essential spectrum, Sturm--Liouville operators, perturbations, relative oscillation}
\thanks{J. Math. Anal. Appl. {\bf 518}, 126673 (2023)}

\begin{abstract}
We develop relative oscillation theory for general Sturm--Liouville differential expressions   of the form
\[
	 \frac{1}{r}\left(-\frac{\mathrm d}{\mathrm dx} p \frac{\mathrm d}{\mathrm dx} + q\right)
\]
and prove
perturbation results and invariance of essential spectra in terms of the real coefficients $p$, $q$, $r$.
The novelty here is that we also allow perturbations of the weight function $r$ in which case the unperturbed
and the perturbed operator act in different Hilbert spaces.
\end{abstract}

\maketitle

\section{Introduction}

The purpose of this paper is to study relative oscillation theory and related perturbation problems for self-adjoint Sturm--Liouville
operators associated with differential expressions of the form
\begin{equation}\label{tautau}
\tau_j=\frac{1}{r_j}\left(-\frac{\mathrm d}{\mathrm dx}p_j\frac{\mathrm d}{\mathrm dx}+q_j\right),\quad j=0,1,
\end{equation}
in the weighted $L^2$-spaces $L^2((a,b);r_j)$, where $-\infty\leq a<b\leq \infty$. As usual, we impose the standard
assumptions that $1/p_j,q_j,r_j \in L^1_{\mathrm{loc}}(a,b)$ are real-valued and $r_j,p_j>0$ a.\,e.
Our main concern in this note is the essential spectrum of self-adjoint realizations associated with
$\tau_j$ and, in particular, conditions on the coefficients which leave the essential spectrum invariant.

It is well known that only the asymptotic behavior of the coefficients near the singular endpoints is relevant for the essential spectrum.
In particular, the essential spectrum is not affected by boundary conditions or the change of the coefficients on any compact subset of $(a,b)$.
Moreover, by imposing an
additional Dirichlet boundary condition at an interior point, the problem can be reduced to two subintervals with one regular and one singular endpoint;
hence it suffices to consider the case that the endpoint $a$ is regular and $b$ is singular.

As mentioned above we are interested in conditions such that two given self-adjoint Sturm--Liouville operators $T_0$ and $T_1$
related to $\tau_0$ and $\tau_1$ in $L^2((a,b);r_0)$ and $L^2((a,b);r_1)$, respectively, have the same essential spectra.
There is a vast literature on this topic for the special case $r_0=r_1$, we mention here only \cite{Weidmann87}, where a good
introduction and further references can be found.

However, the general case $r_0\ne r_1$ has not obtained much attention and to the best of our knowledge there is no (nontrivial) criterion available. From the intuition and our introductory remarks one would expect the essential spectrum to remain unchanged if the
coefficients of $\tau_0$ and $\tau_1$ have the same asymptotic behavior.
 In fact, if
 \begin{equation*}
	 \lim\limits_{x\rightarrow b} \frac{r_1(x)}{r_0(x)}=1,\quad\lim\limits_{x\rightarrow b} \frac{p_1(x)}{p_0(x)}=1,\quad \lim\limits_{x\rightarrow b} \frac{q_1(x)-q_0(x)}{r_0(x)}=0,
\end{equation*}
and $q_0/r_0$ is bounded near $b$, then it turns out in Theorem~\ref{thm:nonoscingap2} that
$\tau_0$ is limit point at $b$ if and only if $\tau_1$ is limit point  at $b$, both operators
$T_0$ and $T_1$ are semibounded from below, and
$$\sigma_\mathrm{ess}(T_0)=\sigma_\mathrm{ess}(T_1).
$$

The key feature in our proof
is relative oscillation theory, which is discussed in Section~\ref{sec2} for general Sturm--Liouville differential expressions of the form \eqref{tautau} along the lines of
\cite{GesztesySimonTeschl96,kt1,kt2,kt3}. Roughly speaking, relative oscillation theory is an
analog of classical oscillation theory for Sturm–Liouville
operators which, rather than measuring the spectrum of one single operator, measures
the difference between the spectra of two different operators.
This is done by replacing zeros of solutions of one operator by weighted zeros of
Wronskians of solutions of two different operators.
Besides the essential spectrum we are also interested in the possible accumulation of eigenvalues to the boundary points of the essential spectrum.
In this context we note that the relative nonoscillatory property in Theorem~\ref{thm:nonoscingap2}~(iv) does not directly apply to boundary points of the essential spectrum and hence
further assumptions on the coefficients are needed to conclude Kneser type results in the spirit of \cite{kt3}; cf. \cite{Kneser93} and also \cite{DunfordSchwartz,GesztesyUnal98,H48,Hi48,RB,Schmidt00,W12}. Here we first formulate Theorem~\ref{thm:gu} as a straightforward generalization of \cite[Theorem 2.1]{kt3}
to obtain sufficient criteria for accumulation and non-accumulation of eigenvalues to the bottom
of the essential spectrum in Theorem~\ref{blaubaer} and Corollary~\ref{cor333}.
These results contain as a special case a variant of Kneser's classical criterion for general Sturm--Liouville operators
of the form ~\eqref{tautau}; cf. Corollary~\ref{kneserli}.

We remark that in the present paper we are only interested in the question whether two given operators are relatively oscillatory or
not. Relative oscillation theory can also be used to compute the precise number of eigenvalues, see \cite{kt1,kt2} (or \cite[Sect.~5.5]{teo}
for a textbook style introduction in the case of regular operators). Relative oscillation theory can also be done in terms of the Maslov
index \cite{HS22}, which is particularly convenient in the case of Sturm--Liouville systems.

\vskip 0.2cm\noindent\\
    {\bf Acknowledgements}.
Jussi Behrndt gratefully acknowledges financial support by the Austrian Science Fund (FWF): P 33568-N.
    	This publication is based upon work from COST Action CA 18232
MAT-DYN-NET, supported by COST (European Cooperation in Science and
Technology), www.cost.eu.

\section{Relative oscillation theory in a nutshell}\label{sec2}

\subsection{Preliminaries}
In this section we recall some results from oscillation theory. An easy introduction in the case of regular problems can be found in \cite{teo}, for more advanced results
we refer to \cite{GesztesySimonTeschl96,Weidmann87,zettl}. Our focus will be on the necessary modifications to accommodate the case $r_0\ne r_1$.

Consider two Sturm--Liouville differential expressions
\begin{equation}
	\label{cheesecake}
	\tau_j=\frac{1}{r_j}\left(-\frac{\mathrm{d}}{\mathrm{d} x}p_j \frac{\mathrm{d} }{\mathrm{d} x} + q_j\right),\quad\text{where }j=0,1,
\end{equation}
on an open interval $(a,b)$ with finite left endpoint $a$ and we shall impose the following conditions
\begin{equation}
	\label{torte}
	\begin{cases}
		p_j,\,q_j,\,r_j \text{ are real-valued functions on }(a,b),\\
		p_j(x)>0,\ r_j(x)>0 \text{ for almost all }x\in (a,b),\\
		1/p_j,\,q_j,\,r_j\in L^1_\mathrm{loc}(a,b),\\
		\tau_j\text{ is regular at }a
	\end{cases}
\end{equation}
for $j=0,1$. Note that since we are interested in the essential spectra of self-adjoint realizations of $\tau_j$, the assumption that $a$ is regular can be made without loss of
generality.

Recall that a nontrivial real-valued solution $u_j$ of $(\tau_j-\lambda)u=0$, $\lambda\in\mathbb R$, can be represented in terms of Prüfer variables, that is, there are absolutely continuous functions $\rho_{u_j}$ and $\theta_{u_j}$ such that
\begin{equation}\label{pruf}
u_j(x)=\rho_{u_j}(x)\sin(\theta_{u_j}(x))\quad\text{and} \quad (p_ju_j')(x)=\rho_{u_j}(x)
\cos(\theta_{u_j}(x)),
\end{equation}
where the Prüfer radius $\rho_{u_j}$ is positive and the Prüfer angle $\theta_{u_j}$ is uniquely
determined once a value of $\theta_{u_j}(x_0)$ is chosen by requiring
continuity of $\theta_{u_j}$. It  satisfies the differential equation
\begin{equation}
	\label{prueferequ}
	\theta_{u_j}' = \frac{1}{p_j} (\cos\theta_{u_j})^2 - (q_j-\lambda r_j) (\sin \theta_{u_j})^2.
\end{equation}
One verifies that the Prüfer angle is strictly increasing at the zeros of the solution $u_j$ and it follows that
the number of zeros of $u_j$ in $(a,x)$ is given by
\begin{equation}
		\label{Nu}
		N_{u_j}(x) := \left\lceil\frac{\theta_{u_j}(x)}{\pi}\right\rceil - \left\lfloor\frac{\theta_{u_j}(a)}{\pi}\right\rfloor - 1,\quad x\in(a,b),
	\end{equation}
where $\lceil\cdot\rceil$ is the ceiling function  and $\lfloor\cdot\rfloor$ the floor function.
For every $x\in (a,b)$ the solution $u_j$ has at most finitely many zeros in $(a,x)$. We note that
the function $N_{u_j}:(a,b)\rightarrow \mathbb{Z}$ is non-negative and increasing.

In the following let $\lambda\in\mathbb R$ and recall that $\tau_0-\lambda$ is said to be \emph{nonoscillatory} if there is a nontrivial real-valued solution
$u$ of $(\tau_0-\lambda) u=0$ with at most finitely many zeros in $(a,b)$, that is, $\lim_{x\rightarrow b} N_u(x)<\infty$.
Otherwise, $\tau_0-\lambda$ is called \emph{oscillatory}. We note that this property is independent of the choice of the solution.
The  number of zeros of a solution of $(\tau_0-\lambda)u=0$ is closely related to the spectra of the self-adjoint realisations of $\tau_0$. More precisely, if $T_0$
is some self-adjoint realisation of $\tau_0$ in
the weighted Hilbert space $L^2((a,b);r_0)$  and $E_0(\cdot)$ denotes the spectral measure of $T_0$ then
\begin{equation}\label{zuj}
\dim\operatorname{ran} \bigl(E_0((-\infty,\lambda))\bigr)<\infty\quad\text{if and only if}\quad \lim_{x\rightarrow b} N_{u}(x)<\infty
\end{equation}
for some (and hence for all)  nontrivial real-valued solutions $u$ of $(\tau_0-\lambda) u=0$. Furthermore, if $-\infty<\lambda<\mu<\infty$
and $u$ and $v$ are nontrivial real-valued solutions of $(\tau_0-\lambda)u=0$ and $(\tau_0-\mu)v=0$, respectively, then
\begin{equation}\label{zuj2}
\dim\operatorname{ran} \bigl(E_0((\lambda,\mu))\bigr)<\infty \quad\text{if and only if}\quad \liminf_{x\rightarrow b} \bigl(N_v(x) - N_u(x)\bigr)<\infty.
\end{equation}
Note that by \eqref{zuj} $T_0$ is semi-bounded from below if and only if there is $\lambda\in\mathbb R$ such that $\lim_{x\rightarrow b} N_{u}(x)<\infty$, that is,
$\tau_0-\lambda$ is nonoscillatory. In this case $\tau_0-\lambda$ is nonoscillatory for all $\lambda<\inf\sigma_{\mathrm{ess}}(T_0)$.

\subsection{Relative oscillation theory}
The central object in this section is the \emph{modified Wronskian} and its zeros.
For solutions $u_0$ and $u_1$ of two different Sturm--Liouville differential expressions,
$$
 (\tau_0-\lambda_0) u_0=0
 \qquad \mbox{and} \quad (\tau_1-\lambda_1)u_1=0,
$$
at two different real values $\lambda_0,\lambda_1$ the \emph{modified Wronskian} is defined by
\begin{equation*}
	W(u_0,u_1)(x):= u_0(x)\, (p_1u_1')(x) - (p_0u_0')(x)\, u_1(x),\qquad x\in(a,b).
\end{equation*}
In the case of real-valued nontrivial solutions  $u_0$ and $u_1$  one obtains from \eqref{pruf}
\begin{equation*}
	W(u_0,u_1)(x) = \rho_{u_0}(x) \rho_{u_1}(x) \sin\bigl( \theta_{u_0}(x)-\theta_{u_1}(x)\bigr)
\end{equation*}
and hence $W(u_0,u_1)(x)=0$ if and only if $\theta_{u_1}(x)-\theta_{u_0}(x) = k\pi$
for some $k\in \mathbb{Z}$. We consider the function
\begin{equation}
\label{test}
N(u_0,u_1)(x) := \left\lceil\frac{\theta_{u_1}(x)-\theta_{u_0}(x)}{\pi}\right\rceil -
\left\lfloor\frac{\theta_{u_1}(a)-\theta_{u_0}(a)}{\pi}\right\rfloor -1,\quad x\in(a,b).
\end{equation}

\begin{remark}
Nontrivial solutions (when considered as vector-valued solutions $(u,pu')$ of the associated system) correspond to a path of one-dimensional
Lagrangian subspaces and hence these subspaces can be identified with the corresponding Pr\"ufer angles. In particular, two such path
cross whenever the Pr\"ufer angles agree modulo $\pi$ and hence whenever the Wronskian of the two solutions vanishes.
Consequently, \eqref{test} can be identified with the Maslov index of the two solutions on the interval $(a,x)$ (cf.\ \cite{HS22}).
\end{remark}

Let $u_2$ be a real-valued nontrivial solution of  $(\tau_2-\lambda_2)u=0$, where  $\tau_2$ is a differential
expression of the form \eqref{cheesecake} satisfying \eqref{torte}.
It follows from \eqref{Nu} and the properties of the  ceiling function $\lceil\cdot\rceil$ and the floor function $\lfloor\cdot\rfloor$ that
\begin{equation}
		\label{lady}
		N_{u_1}(x) - N_{u_0}(x) -3 \leq N(u_0,u_1)(x)  \leq N_{u_1}(x) - N_{u_0}(x) + 1,
	\end{equation}
	\begin{equation}
		\label{klotz}
		-N(u_1,u_0)(x)-2\leq N(u_0,u_1)(x)\leq -N(u_1,u_0)(x),\mbox{ and}
	\end{equation}
	\begin{equation}
		\label{gaga}
		N(u_0,u_1)(x) + N(u_1,u_2)(x) -1 \leq N(u_0,u_2)(x) \leq N(u_0,u_1)(x) + N(u_1,u_2)(x) +1
	\end{equation}
	for all $x\in (a,b)$.

\begin{lemma}
	\label{gabel}
	Suppose that \eqref{torte} holds for $j=0$. Let $u$ and $v$ be nontrivial real-valued solutions of $(\tau_0-\lambda)u=0$ for $\lambda\in\mathbb R$. If $u$ and $v$ are linearly dependent solutions then $N(u,v)(x)=-1$ for all $x\in (a,b)$. Otherwise $N(u,v)(x)=0$ for all $x\in (a,b)$.
\end{lemma}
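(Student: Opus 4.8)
The plan is to use the fact that $u$ and $v$ solve the \emph{same} differential equation $(\tau_0-\lambda)u=0$ with the \emph{same} coefficient $p_0$, so that the modified Wronskian $W(u,v)=u\,(p_0v')-(p_0u')\,v$ is in fact the classical Wronskian and is therefore constant on $(a,b)$. Indeed, differentiating and inserting $(p_0u')'=(q_0-\lambda r_0)u$ together with the analogous identity for $v$, all terms cancel, so $W(u,v)\equiv c$ for some $c\in\mathbb R$. Written in Prüfer variables this becomes $\rho_u(x)\rho_v(x)\sin(\theta_u(x)-\theta_v(x))=c$, and because $\rho_u,\rho_v>0$ the question of whether $\sin(\theta_u-\theta_v)$ vanishes is decided solely by whether $c=0$, which in turn corresponds exactly to linear dependence of $u$ and $v$.

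First I would treat the linearly dependent case, where $W(u,v)\equiv 0$ and hence $\sin(\theta_u(x)-\theta_v(x))=0$, i.e.\ $\theta_v(x)-\theta_u(x)\in\pi\mathbb Z$ for every $x\in(a,b)$. Since $\theta_v-\theta_u$ is continuous and takes values in the discrete set $\pi\mathbb Z$, it must be constant, say $\theta_v(x)-\theta_u(x)=k\pi$ with a fixed $k\in\mathbb Z$. Then both the ceiling evaluated at $x$ and the floor evaluated at $a$ in \eqref{test} equal $k$, so $N(u,v)(x)=k-k-1=-1$ for all $x$.

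For the linearly independent case we have $W(u,v)\equiv c\neq 0$, so $\sin(\theta_u(x)-\theta_v(x))\neq 0$ and $\theta_v-\theta_u$ never meets an integer multiple of $\pi$. As $\theta_v-\theta_u$ is continuous, it therefore stays inside a single open interval $(k\pi,(k+1)\pi)$ for one fixed $k\in\mathbb Z$; thus $(\theta_v(x)-\theta_u(x))/\pi\in(k,k+1)$, the ceiling in \eqref{test} equals $k+1$ while the floor at $a$ equals $k$, giving $N(u,v)(x)=(k+1)-k-1=0$. There is no serious obstacle here beyond keeping track of the mismatch between the ceiling-at-$x$ and the floor-at-$a$ in \eqref{test}: it is precisely their cancellation of the common integer $k$ that produces the values $-1$ and $0$, respectively.
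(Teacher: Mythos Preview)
Your proof is correct and follows essentially the same route as the paper's: both arguments use that the (modified) Wronskian is the classical one and hence constant on $[a,b)$, then distinguish the cases $W\equiv 0$ and $W\equiv c\neq 0$ via the Pr\"ufer representation and continuity of $\theta_v-\theta_u$ to pin down the common integer $k$. The only cosmetic difference is that you spell out the ceiling/floor arithmetic explicitly, whereas the paper leaves that implicit.
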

\begin{proof}
	Since $u$ and $v$ are solutions of the same differential equation, the Wronskian is constant on $[a,b)$. If $u$ and $v$ are linearly dependent then the Wronskian vanishes everywhere and due to the representation by means of Prüfer variables we see $\theta_v(x)-\theta_u(x)=k\pi$ for all $x\in [a,b)$ and a suitable $k\in\mathbb{Z}$. This implies $N(u,v)(x)=-1$ for all $x\in (a,b)$. Otherwise, if both functions are linearly independent then the Wronskian has no zeros in $[a,b)$. Hence, the difference of Prüfer angles $\theta_v-\theta_u$ does not attain any integer multiple of $\pi$. By continuity we have $\theta_v(x)-\theta_u(x)\in (k\pi,(k+1)\pi)$ for all $x\in [a,b)$ and some $k\in\mathbb{Z}$, which shows $N(u,v)(x)=0$.
\end{proof}

Under some additional assumptions on the coefficients of $\tau_j$ it turns out that the function $N(u_0,u_1)$ in \eqref{test} has similar properties
	as the functions $N_{u_j}$ in \eqref{Nu}.

\begin{lemma}\label{lemmayy}
Let $u_j$ be real-valued nontrivial solutions of $(\tau_j-\lambda_j)u=0$ for $j=0,1$, and $\lambda_j\in\mathbb R$.
\begin{itemize}
 \item [{\rm (i)}] Assume that the conditions
 \begin{equation}\label{cond1}
  p_0\geq p_1 \quad\text{and}\quad q_0-\lambda_0 r_0 \geq q_1-\lambda_1 r_1
 \end{equation}
 hold. Then $N(u_0,u_1)$ is an increasing function with $N(u_0,u_1)(x)\geq -1$ for all $x\in (a,b)$.
 \item [{\rm (ii)}] Assume that the conditions
 \begin{equation}\label{cond2}
  p_0\geq p_1 \quad\text{and}\quad q_0-\lambda_0 r_0 > q_1-\lambda_1 r_1
 \end{equation}
 hold. Then
 for every $x\in (a,b)$ the Wronskian $W(u_0,u_1)$ has at most finitely many zeros in $(a,x)$ and the value $N(u_0,u_1)(x)$ coincides with the number of zeros of $W(u_0,u_1)$ in $(a,x)$.
\end{itemize}
\end{lemma}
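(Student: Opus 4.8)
The plan is to reformulate everything in terms of the difference of Prüfer angles $\Delta := \theta_{u_1} - \theta_{u_0}$, which is absolutely continuous on $[a,b)$, and to recall from the discussion preceding \eqref{test} that the zeros of $W(u_0,u_1)$ are exactly the points where $\Delta \in \pi\mathbb{Z}$, while $N(u_0,u_1)$ tracks, via the ceiling/floor bookkeeping in \eqref{test}, how $\Delta/\pi$ moves past integers. The whole lemma thus reduces to controlling the direction in which $\Delta$ may cross integer multiples of $\pi$.

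The key observation is that, writing the right-hand side of \eqref{prueferequ} as $g_j(\theta,x) = \frac{1}{p_j}\cos^2\theta - (q_j - \lambda_j r_j)\sin^2\theta$, the difference $g_1 - g_0 = (\frac{1}{p_1} - \frac{1}{p_0})\cos^2\theta + ((q_0 - \lambda_0 r_0)-(q_1-\lambda_1 r_1))\sin^2\theta$ is nonnegative for all $\theta$ and a.e.\ $x$ precisely under the hypotheses \eqref{cond1}. Hence $\theta_{u_1}$ is a supersolution of the scalar Carathéodory equation $\theta' = g_0(\theta,x)$, of which $\theta_{u_0}$ is a solution; moreover $g_0$ is $\pi$-periodic in $\theta$, so $\theta_{u_0} + k\pi$ is again a solution for every $k\in\mathbb{Z}$. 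The comparison principle for such equations (solutions are unique since $g_0$ is Lipschitz in $\theta$ with an $L^1_{\mathrm{loc}}$ bound) then yields exactly the monotonicity I am after: if $\Delta(x_0) = k\pi$ then $\theta_{u_1} \ge \theta_{u_0} + k\pi$ for $x\ge x_0$, i.e.\ $\Delta$ never returns below a multiple of $\pi$ once it has reached it, and if $\Delta(x_0) > k\pi$ then $\Delta(x) > k\pi$ for all $x\ge x_0$. From these two facts I would deduce that $\lceil\Delta/\pi\rceil$ is non-decreasing, which is the monotonicity in (i), and applying the same comparison with $x_0=a$ gives the lower bound $N(u_0,u_1)\ge -1$.

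For (ii) I would upgrade the non-strict comparison to a strict one. Suppose $\Delta(x_0)=\Delta(x_1)=k\pi$ with $x_0<x_1$; by the previous step $\Delta\equiv k\pi$ on $[x_0,x_1]$, so $\theta_{u_1}=\theta_{u_0}+k\pi$ there and hence $(g_1-g_0)(\theta_{u_1},\cdot)=0$ a.e.\ on $[x_0,x_1]$. Under \eqref{cond2} the coefficient of $\sin^2\theta$ in $g_1-g_0$ is strictly positive a.e., which forces $\sin\theta_{u_1}=0$ a.e.\ on $[x_0,x_1]$; but the zeros of the nontrivial solution $u_1$ are isolated (equivalently, $\theta_{u_1}$ is strictly increasing through each multiple of $\pi$, as \eqref{prueferequ} gives $\theta_{u_1}'=1/p_1>0$ there), a contradiction. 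Thus each level $k\pi$ is attained at most once, and combined with the monotonicity each zero of $W(u_0,u_1)$ is a transversal upward crossing, with $\Delta<k\pi$ strictly before and $\Delta>k\pi$ strictly after. Since $\Delta$ is bounded on each compact $[a,x]$, only finitely many multiples of $\pi$ lie in its range, so there are finitely many zeros; and since $\lceil\Delta/\pi\rceil$ jumps by exactly one at each such crossing and changes nowhere else, it counts these zeros, which is the claimed identity $N(u_0,u_1)(x)=\#\{\text{zeros of }W(u_0,u_1)\text{ in }(a,x)\}$.

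I expect the main obstacle to be that the coefficients are only $L^1_{\mathrm{loc}}$, so $\theta_{u_j}$ need not be differentiable at a prescribed point and naive pointwise sign arguments at the zeros of $W$ (where one would like to assert $\Delta'\ge 0$) are unavailable; this is exactly why the argument must be routed through the integrated comparison principle for Carathéodory equations rather than through the derivative. The second delicate point, resolved by the a.e.\ argument above, is the degenerate crossing where $u_0$ and $u_1$ vanish simultaneously and $p_0=p_1$, at which the pointwise derivative of $\Delta$ genuinely vanishes.
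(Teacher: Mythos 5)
Your proof is correct, and for part (i) it is essentially the paper's argument in different clothing: the paper derives $\delta'=f+h\delta$ with $f\ge 0$ by an explicit angle-addition computation and integrates with the factor $g=\exp(-\int h)$ in \eqref{567}, which is precisely the Gronwall argument hiding inside the comparison principle you invoke (with $\theta_{u_1}$ a supersolution of $\theta'=g_0(\theta,x)$ and $\theta_{u_0}+k\pi$ a solution by $\pi$-periodicity). You trade the trigonometric manipulation for an appeal to the Carath\'eodory comparison theorem, whose hypotheses you correctly reduce to the $L^1_{\mathrm{loc}}$ Lipschitz bound $1/p_0+|q_0-\lambda_0 r_0|$; note that your strict version (a strict initial gap propagates) is needed for the monotonicity of the ceiling, and it follows from the same Gronwall estimate. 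The treatment of part (ii) is where you genuinely diverge: the paper extracts strictness pointwise (the inequality in \eqref{567} becomes strict under \eqref{cond2}) and then counts zeros by an iterated case analysis on which interval $((k+m)\pi,(k+m+1)\pi]$ contains $\theta_{u_1}(x)-\theta_{u_0}(x)$, using \eqref{lolo} and \eqref{lolo2}; you instead prove the structural fact that each level $k\pi$ is attained at most once, since two hits would force $\theta_{u_1}-\theta_{u_0}\equiv k\pi$ on an interval and hence $\sin\theta_{u_1}=0$ a.e.\ there, contradicting the isolation of the zeros of the nontrivial solution $u_1$. This is a tidier way to dispose of the delicate degenerate case (simultaneous vanishing of $u_0$ and $u_1$ where $p_0=p_1$), and your final count of upward crossings against $\lceil(\theta_{u_1}-\theta_{u_0})/\pi\rceil$ agrees with the paper's. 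Both routes are complete; yours buys conceptual economy at the cost of outsourcing the comparison theorem, while the paper's is self-contained.
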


\begin{proof} (i) \
 Let $a\leq \xi<x<b$ and assume that $\theta_{u_1}(\xi)-\theta_{u_0}(\xi)\in [k\pi,(k+1)\pi)$ for some $k\in\mathbb{Z}$.
By \eqref{prueferequ} and the  angle addition formulae
$\sin(\alpha+\beta)\sin(\alpha-\beta)=\cos^2 \beta-\cos^2 \alpha=\sin^2 \alpha-\sin^2 \beta$  we obtain
	\begin{equation*}
		\begin{split}
			\theta_{u_1}'-\theta_{u_0}'
			={ }&\left(\frac{1}{p_1}-\frac{1}{p_0}\right) \cos^2\theta_{u_1} + \bigl((q_0-\lambda_0 r_0)-(q_1-\lambda_1 r_1)\bigr) \sin^2\theta_{u_0}\\[0.5\baselineskip]
			&-(q_1-\lambda_1r_1)\Bigl(\sin^2\theta_{u_1} - \sin^2\theta_{u_0}\Bigr) - \frac{1}{p_0}\Bigl(\cos^2\theta_{u_0}-\cos^2\theta_{u_1}\Bigr)
\\[0.5\baselineskip]
			 ={ }& \left(\frac{1}{p_1}-\frac{1}{p_0}\right) \cos^2\theta_{u_1} + \bigl((q_0-\lambda_0 r_0)-(q_1-\lambda_1 r_1)\bigr) \sin^2\theta_{u_0}\\[0.5\baselineskip]
			&- (-1)^{k}\left(\frac{1}{p_0} + q_1-\lambda_1 r_1\right) \sin(\theta_{u_0}+\theta_{u_1})\sin\delta,
		\end{split}
	\end{equation*}
where $\delta$ stands for $\theta_{u_1}-\theta_{u_0}-k\pi$. We consider the functions
	\begin{equation}
		\label{exile}
		f= \left(\frac{1}{p_1}-\frac{1}{p_0}\right) \cos^2\theta_{u_1} + \bigl((q_0-\lambda_0 r_0)-(q_1-\lambda_1 r_1)\bigr) \sin^2\theta_{u_0}
	\end{equation}
	and
	\begin{equation*}
		h = - (-1)^{k}\left(\frac{1}{p_0} + q_1-\lambda_1 r_1\right) \sin(\theta_{u_0}+\theta_{u_1})\frac{\sin\delta}{\delta}.
	\end{equation*}
	Clearly, we have $\delta' = f + h \delta$,
	where the functions $f$, $h$ are integrable on $(a,c)$ for all $c\in(a,b)$.
	Consider the positive function $g$ given by
	\begin{equation*}
		g(x) = \exp\left(-\int_a^x h(t)\,\mathrm dt\right).
	\end{equation*}
	Then
	\begin{equation}
		\label{567}
		(g\delta)' = - \delta hg + (f+h\delta)g = fg\geq 0
	\end{equation}
	by \eqref{exile} and \eqref{cond1}. Hence, $g\delta$ is an increasing function. 
	For $x>\xi$ the estimate
	\begin{equation}
		\label{jabbaII}
		g(x)\bigl(\theta_{u_1}(x)-\theta_{u_0}(x)-k\pi\bigr) = (g\delta)(x) \geq (g\delta)(\xi) = g(\xi)\bigl(\theta_{u_1}(\xi)-\theta_{u_0}(\xi)-k\pi\bigr)
	\end{equation}
	holds. As $\theta_{u_1}(\xi)-\theta_{u_0}(\xi)\in [k\pi,(k+1)\pi)$,
\eqref{jabbaII} implies $\theta_{u_1}(x)-\theta_{u_0}(x)\geq k\pi$ and
	\begin{equation*}
		\left\lfloor\frac{\theta_{u_1}(\xi)-\theta_{u_0}(\xi)}{\pi}\right\rfloor
\leq\left\lceil\frac{\theta_{u_1}(\xi)-\theta_{u_0}(\xi)}{\pi}\right\rceil\leq \left\lceil\frac{\theta_{u_1}(x)-\theta_{u_0}(x)}{\pi}\right\rceil.
	\end{equation*}
This shows $N(u_0,u_1)(\xi)\leq N(u_0,u_1)(x)$ and with $\xi=a$ one sees $N(u_0,u_1)(x)\geq -1$.

(ii) Under the stronger condition \eqref{cond2}, the inequality in
\eqref{567} is strict (almost everywhere in a neighbourhood of $\xi$) and, hence, also the inequality in \eqref{jabbaII}. In particular, we see that for $x>\xi$
\begin{equation}\label{lolo}
 \theta_{u_1}(\xi)-\theta_{u_0}(\xi)\geq k\pi\quad\text{implies}\quad \theta_{u_1}(x)-\theta_{u_0}(x) > k\pi
\end{equation}
and for $x<\xi$ the inequality in  \eqref{jabbaII} changes accordingly and
\begin{equation}\label{lolo2}
 \theta_{u_1}(\xi)-\theta_{u_0}(\xi)\leq k\pi\quad\text{implies}\quad \theta_{u_1}(x)-\theta_{u_0}(x) < k\pi.
\end{equation}
In what follows, choose $x\in (a,b)$ and $k\in \mathbb{Z}$ with
$\theta_{u_1}(a)-\theta_{u_0}(a)\in [k\pi,(k+1)\pi)$ which means
$\left\lfloor \theta_{u_1}(a)-\theta_{u_0}(a)\right\rfloor = k\pi$.
Moreover, by  \eqref{lolo}, we have
$$
\theta_{u_1}(y)-\theta_{u_0}(y) \in (k\pi,\infty) \quad\text{for all}\quad y\in (a,x).
$$

If $\theta_{u_1}(x)-\theta_{u_0}(x) \in (k\pi,(k+1)\pi]$, then
$N(u_0,u_1)(x)=0$ by definition.  By \eqref{lolo} there is no $y \in  (a,x)$
with $\theta_{u_1}(y)-\theta_{u_0}(y) \geq (k+1)\pi$.
Therefore
$$
\theta_{u_1}(y)-\theta_{u_0}(y) \in (k\pi,(k+1)\pi) \quad\text{for all}\quad y\in (a,x).
$$
As the Wronskian $W(u_0,u_1)$ is zero
if and only if $\theta_{u_1}(y)-\theta_{u_0}(y)$ equals $l\pi$  for some
$l\in \mathbb{Z}$, we see that on the interval $(a,x)$ there are no zeros
of the Wronskian.
This coincides with the value of $N(u_0,u_1)(x)$.

If $\theta_{u_1}(x)-\theta_{u_0}(x) \in ((k+1)\pi,(k+2)\pi]$, then
$N(u_0,u_1)(x)=1$ by definition.  By \eqref{lolo} there is no $y \in  (a,x)$
with $\theta_{u_1}(y)-\theta_{u_0}(y) \geq (k+2)\pi$.
Therefore
$$
\theta_{u_1}(y)-\theta_{u_0}(y) \in (k\pi,(k+2)\pi) \quad\text{for all}\quad y\in (a,x).
$$
As the function $\theta_{u_1}-\theta_{u_0}$ is continuous and takes in
$a$ a value below $(k+1)\pi$ and in $x$ a value above $(k+1)\pi$, there
exists $y_1 \in (a,x)$ with $\theta_{u_1}(y_1)-\theta_{u_0}(y_1) =(k+1)\pi$,
which is a zero of the Wronskian. An application of \eqref{lolo} and \eqref{lolo2}
with $\xi=y_1$ shows that this is the only zero of the Wronskian in the interval
$(a,x)$, which coincides with the value of $N(u_0,u_1)(x)$.

If $\theta_{u_1}(x)-\theta_{u_0}(x) \in ((k+2)\pi,(k+3)\pi]$, then
$N(u_0,u_1)(x)=2$ by definition. Similar as above,
 by \eqref{lolo}, there is no $y \in  (a,x)$
with $\theta_{u_1}(y)-\theta_{u_0}(y) \geq (k+3)\pi$ and we conclude with
\eqref{lolo} and \eqref{lolo2} that the Wronskian on the interval
$(a,x)$ has $N(u_0,u_1)(x)=2$ zeros.
Continuing in this way shows the statement.
\end{proof}

An important special case in Lemma~\ref{lemmayy}~(ii)
is the case that $u_0$ and $v_0$ are real-valued solutions of $(\tau_0-\lambda)u=0$ and
	$(\tau_0-\mu)v=0$, respectively, where $\lambda<\mu$. In this situation \eqref{cond2} holds with $p_0=p_1$ and $q_0-\lambda r_0 > q_0-\mu r_0$ and hence
	$N(u_0,v_0)(x)<\infty$ is the number of zeros of the Wronskian $W(u_0,v_0)$ in $(a,x)$.

As a consequence we also conclude the following useful version of Sturm's comparison theorem.
	
	\begin{corollary}[Sturm's comparison theorem]
	\label{Scomparison}
	Let $u_j$ be real-valued nontrivial solutions of $(\tau_j-\lambda_j)u=0$ for $j=0,1$, and $\lambda_j\in\mathbb R$, and let $x_0$ and $x_1$ be consecutive zeros of $u_0$ in $(a,b)$. If the condition \eqref{cond2} holds, then there is at least one zero $y\in (x_0,x_1)$ of $u_1$.
\end{corollary}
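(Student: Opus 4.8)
The plan is to argue by contradiction, translating everything into the Prüfer angles of $u_0$ and $u_1$ and then invoking the strict monotonicity \eqref{lolo} that was established for the angle difference under the strict hypothesis \eqref{cond2}. The whole argument rests on two facts: that $\theta_{u_0}$ advances by exactly $\pi$ between consecutive zeros of $u_0$, and that $\theta_{u_1}-\theta_{u_0}$ can cross any fixed level $k\pi$ only strictly upward.

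First I would record the behaviour of $\theta_{u_0}$ at the zeros of $u_0$. From \eqref{prueferequ} one sees that at any point where $\sin\theta_{u_0}=0$, i.e.\ $\theta_{u_0}\in\pi\mathbb Z$, the derivative equals $\theta_{u_0}'=\tfrac{1}{p_0}>0$; hence $\theta_{u_0}$ crosses every integer multiple of $\pi$ strictly upward and can never return to it from above. Consequently, for the consecutive zeros $x_0<x_1$ there is $m\in\mathbb Z$ with $\theta_{u_0}(x_0)=m\pi$ and $\theta_{u_0}(x_1)=(m+1)\pi$, while $\theta_{u_0}(x)\in(m\pi,(m+1)\pi)$ for $x\in(x_0,x_1)$.

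Next I would assume, towards a contradiction, that $u_1$ has no zero in the open interval $(x_0,x_1)$. By \eqref{pruf} this means $\theta_{u_1}(x)\notin\pi\mathbb Z$ for all $x\in(x_0,x_1)$; since $\theta_{u_1}$ is continuous, it stays in a single open strip, so there is $\ell\in\mathbb Z$ with $\theta_{u_1}(x)\in(\ell\pi,(\ell+1)\pi)$ for $x\in(x_0,x_1)$ and, by continuity up to the endpoints, $\theta_{u_1}(x_0),\theta_{u_1}(x_1)\in[\ell\pi,(\ell+1)\pi]$. Writing $k:=\ell-m$ and $\delta:=\theta_{u_1}-\theta_{u_0}$, these endpoint bounds combine with the first step to give $\delta(x_0)=\theta_{u_1}(x_0)-m\pi\geq k\pi$ and $\delta(x_1)=\theta_{u_1}(x_1)-(m+1)\pi\leq k\pi$.

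Finally I would apply \eqref{lolo}, which holds under \eqref{cond2} by Lemma~\ref{lemmayy}(ii): taking $\xi=x_0$ and $x=x_1>x_0$, the inequality $\delta(x_0)\geq k\pi$ forces $\delta(x_1)>k\pi$, contradicting $\delta(x_1)\leq k\pi$. Hence $u_1$ must vanish somewhere in $(x_0,x_1)$. I expect the only point requiring care to be the endpoint analysis: one must allow $\theta_{u_1}(x_0)$ and $\theta_{u_1}(x_1)$ to sit exactly on the boundary of the strip, which is precisely why the non-strict inequalities $\delta(x_0)\geq k\pi$ and $\delta(x_1)\leq k\pi$ appear, and why the strictness built into \eqref{lolo} (and thus the strict hypothesis \eqref{cond2}) is exactly what is needed to close the argument.
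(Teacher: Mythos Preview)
Your proof is correct and uses essentially the same ingredients as the paper's own argument: the Pr\"ufer angles at the zeros of $u_0$ together with the strict monotonicity \eqref{lolo}. The only difference is cosmetic---the paper argues directly, fixing $\theta_{u_1}(x_0)\in[j\pi,(j+1)\pi)$, applying \eqref{lolo} to conclude $\theta_{u_1}(x_1)>(j+1)\pi$, and then invoking the intermediate value theorem to locate the zero---whereas you repackage the same step as a contradiction.
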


\begin{proof}
Let $\theta_{u_0}(x_0)=k\pi$, $\theta_{u_0}(x_1)=(k+1)\pi$, and $\theta_{u_1}(x_0)\in [j\pi,(j+1)\pi)$ for some $k$, $j\in\mathbb{Z}$. Then
	\begin{equation*}
		(j-k)\pi\leq\theta_{u_1}(x_0)-\theta_{u_0}(x_0)
	\end{equation*}
	and by \eqref{lolo}
	\begin{equation*}
		(j-k)\pi<\theta_{u_1}(x_1)-\theta_{u_0}(x_1) = \theta_{u_1}(x_1)-(k+1)\pi.
	\end{equation*}
	Therefore, $\theta_{u_1}(x_1)>(j+1)\pi$ which yields the existence of $y\in (x_0,x_1)$ with $\theta_{u_1}(y)=(j+1)\pi$, that is $u_1(y)=0$.
	\end{proof}

We next introduce the concept of relative oscillation. The following definition is due to Krüger and Teschl \cite{kt1,kt2,kt3}.

\begin{definition}\label{def:relosc}
	For $j=0,1$ and $\lambda_j\in\mathbb R$ consider nontrivial real-valued solutions $u_j$ of $(\tau_j-\lambda_j)u=0$.
	We say that $\tau_0-\lambda_0$ is \emph{relatively nonoscillatory} with respect to $\tau_1-\lambda_1$ if both limits
	\begin{equation*}
		\underline{N}(u_0,u_1) := \liminf_{x\rightarrow b}
		N(u_0,u_1)(x) \quad\mbox{and}\quad \overline{N}(u_0,u_1) :=
		\limsup_{x\rightarrow b} N(u_0,u_1)(x)
	\end{equation*}
	are finite. Otherwise, $\tau_0-\lambda_0$ is called \emph{relatively oscillatory} with respect to $\tau_1-\lambda_1$.
\end{definition}

It turns out that the definition of relative (non)oscillation does not depend on the particular solutions.
In fact, for another pair of nontrivial real-valued solutions $v_0$, $v_1$ of of $(\tau_0-\lambda_0)u=0$ and $(\tau_1-\lambda_1)u=0$, respectively, the inequality \eqref{gaga} applied twice together with Lemma~\ref{gabel} implies
\begin{equation*}
	\begin{split}
		N(v_0,v_1)(x) &\leq N(v_0,u_0)(x) + N(u_0,v_1)(x) +1\\[0.5\baselineskip]
		&\leq N(v_0,u_0)(x) + N(u_0,u_1)(x) + N(u_1,v_1)(x) +2 \leq N(u_0,u_1)(x) +2
	\end{split}
\end{equation*}
and
\begin{equation*}
	\begin{split}
		N(v_0,v_1)(x) &\geq N(v_0,u_0)(x) + N(u_0,v_1)(x) -1\\[0.5\baselineskip]
		&\geq N(v_0,u_0)(x) + N(u_0,u_1)(x) + N(u_1,v_1)(x) -2 \geq N(u_0,u_1)(x) -4
	\end{split}
\end{equation*}
for all $x\in (a,b)$. Hence, the limits $\overline N(u_0,u_1)$ and $\underline N(u_0,u_1)$ are finite if and only if $\overline N(v_0,v_1)$ and $\underline N(v_0,v_1)$ are finite. Furthermore, the notion {\it relatively nonoscillatory} gives rise to an equivalence relation. Below we will use the following facts which are direct consequences of
 \eqref{klotz} and  \eqref{gaga}. For this let $\tau_2$ be a differential
expression of the form \eqref{cheesecake} satisfying \eqref{torte}.
 \begin{itemize}
 \item[{\rm (a)}] If $\tau_0-\lambda_0$ is relatively oscillatory with respect to $\tau_1-\lambda_1$, then
  $\tau_1-\lambda_1$ is relatively oscillatory with respect to $\tau_0-\lambda_0$.
  \item[{\rm (b)}] If  $\tau_0-\lambda_0$ is relatively oscillatory with respect to $\tau_1-\lambda_1$ and
   $\tau_1-\lambda_1$ is relatively oscillatory with respect to $\tau_2-\lambda_2$, then
    $\tau_0-\lambda_0$ is relatively oscillatory with respect to $\tau_2-\lambda_2$.
 \end{itemize}


Note also that under assumption \eqref{cond1}
or \eqref{cond2}
the function $N(u_0,u_1)$ is increasing and hence in that case
\begin{equation}\label{kkk}
 \lim_{x\rightarrow b} N(u_0,u_1)(x)= \underline{N}(u_0,u_1)=\overline{N}(u_0,u_1)\leq \infty.
\end{equation}

The next lemma describes the relationship between classical and relative oscillation; cf. \cite[Lemma~4.5]{kt2}.

\begin{lemma}\label{MyLittlePony}
	Suppose that $\tau_0-\lambda_0$ is nonoscillatory. Then $\tau_1-\lambda_1$ is relatively nonoscillatory with respect to $\tau_0-\lambda_0$ if and only if $\tau_1-\lambda_1$ is nonoscillatory.
\end{lemma}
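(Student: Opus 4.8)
The plan is to reduce the statement to the elementary sandwich inequality \eqref{lady}, exploiting that nonoscillation of $\tau_0-\lambda_0$ forces the zero-counting function $N_{u_0}$ to be bounded. First I fix nontrivial real-valued solutions $u_0$ of $(\tau_0-\lambda_0)u=0$ and $u_1$ of $(\tau_1-\lambda_1)u=0$; since both notions are independent of the chosen solution (the classical one as noted after \eqref{Nu}, the relative one as established right after Definition~\ref{def:relosc}), it suffices to argue with these fixed solutions. Because $\tau_0-\lambda_0$ is nonoscillatory we have $\lim_{x\to b}N_{u_0}(x)<\infty$, and as $N_{u_0}$ is non-negative and increasing this yields a constant $C$ with $0\le N_{u_0}(x)\le C$ for all $x\in(a,b)$.

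Next I feed this bound into \eqref{lady}, which gives
\[
 N_{u_1}(x)-C-3\le N_{u_1}(x)-N_{u_0}(x)-3\le N(u_0,u_1)(x)\le N_{u_1}(x)-N_{u_0}(x)+1\le N_{u_1}(x)+1 .
\]
Thus $N(u_0,u_1)(x)$ differs from the increasing function $N_{u_1}(x)$ by a quantity trapped in the fixed band $[-C-3,1]$, so $N(u_0,u_1)$ is bounded near $b$ if and only if $N_{u_1}$ is bounded near $b$. To match the ordering in the statement I use \eqref{klotz}, which shows that $N(u_0,u_1)$ is bounded near $b$ precisely when $N(u_1,u_0)$ is.

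Now I run the two implications. If $\tau_1-\lambda_1$ is nonoscillatory then $N_{u_1}$ is bounded, so the displayed sandwich bounds $N(u_0,u_1)$ above and below near $b$, and by \eqref{klotz} so is $N(u_1,u_0)$; hence $\underline N(u_1,u_0)$ and $\overline N(u_1,u_0)$ are finite, i.e.\ $\tau_1-\lambda_1$ is relatively nonoscillatory with respect to $\tau_0-\lambda_0$. Conversely, if $\tau_1-\lambda_1$ is relatively nonoscillatory with respect to $\tau_0-\lambda_0$, then $\overline N(u_1,u_0)<\infty$, so by \eqref{klotz} there is $M$ with $N(u_0,u_1)(x)\le M$ for all $x$ close to $b$; the lower bound in the sandwich then gives $N_{u_1}(x)\le M+C+3$ there, and since $N_{u_1}$ is increasing its limit at $b$ is finite, whence $\tau_1-\lambda_1$ is nonoscillatory.

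The computations are routine, so the only genuine point to get right is the logical bookkeeping. Because $N(u_0,u_1)$ need not be monotone (monotonicity is available only under \eqref{cond1}/\eqref{cond2}), one must genuinely use \emph{both} the upper and the lower estimate in \eqref{lady}, and one must pass through \eqref{klotz} to convert between the ordering $N(u_1,u_0)$ demanded by the statement and the ordering $N(u_0,u_1)$ that is naturally controlled by the nonoscillatory solution $u_0$; boundedness of $N_{u_0}$ is what makes these two orderings interchangeable here.
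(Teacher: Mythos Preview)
Your proof is correct and follows essentially the same route as the paper: fix solutions, use nonoscillation of $\tau_0-\lambda_0$ to bound $N_{u_0}$ uniformly, and feed this into the sandwich \eqref{lady} to see that boundedness of $N(u_0,u_1)$ is equivalent to boundedness of $N_{u_1}$. Your explicit invocation of \eqref{klotz} to reconcile the ordering $N(u_1,u_0)$ versus $N(u_0,u_1)$ is a bit more careful than the paper, which silently relies on the symmetry property~(a) established after Definition~\ref{def:relosc}; this is a cosmetic difference, not a substantive one.
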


\begin{proof}
	Let $u_0$ and $u_1$ be nontrivial real-valued solutions of $(\tau_0-\lambda_0)u_0=0$ and $(\tau_1-\lambda_1)u_1=0$, respectively.
	Since $\tau_0-\lambda_0$ is nonoscillatory the solution $u_0$ has at most finitely many zeros in $(a,b)$ and hence we have $0\leq N_{u_0}(x)\leq n_0$ for some
	$n_0\in\mathbb{N}$ and all $x\in (a,b)$. Therefore \eqref{lady} implies
	\begin{equation*}
	\begin{split}
		N_{u_1}(x) - n_0 -3 &\leq N_{u_1}(x) - N_{u_0}(x) - 3\\
		                    &\leq N(u_0,u_1)(x)\\
		                    &\leq N_{u_1}(x) - N_{u_0}(x) + 1 \leq N_{u_1}(x) + 1
    \end{split}
	\end{equation*}
	for all $x\in (a,b)$. This shows that $\lim_{x\rightarrow b} N_{u_1}(x)$ is finite if and only if $(\tau_0-\lambda_0)$ is
	relatively nonoscillatory with respect to $(\tau_1-\lambda_1)$.
\end{proof}

Along the lines of \eqref{zuj2} we obtain a result on the finiteness and infiniteness of the spectrum. Again
$E_0(\cdot)$ denotes the spectral measure of $T_0$; cf.\ \cite[Sect.~14]{Weidmann87}.

\begin{lemma}\label{111}
 Let $T_0$ be a self-adjoint realisation of $\tau_0$ in $L^2((a,b);r_0)$ and fix $\lambda,\mu\in\mathbb R$ with $\lambda<\mu$. Then
 $\dim \operatorname{ran}(E_0((\lambda,\mu)))<\infty$ if and only if $\tau_0-\lambda$ is relatively nonoscillatory with respect to
 $\tau_0-\mu$.
\end{lemma}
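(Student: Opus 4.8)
The plan is to combine the spectral characterization \eqref{zuj2} with the monotonicity of the relative zero count $N(u,v)$ that is special to this comparison. Fix nontrivial real-valued solutions $u$ of $(\tau_0-\lambda)u=0$ and $v$ of $(\tau_0-\mu)v=0$; by the solution-independence of relative (non)oscillation established after Definition~\ref{def:relosc} it suffices to work with this single pair, and by \eqref{zuj2} this same pair also characterizes $\dim\operatorname{ran}(E_0((\lambda,\mu)))<\infty$.

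First I would record that here both expressions coincide with $\tau_0$, so $p_0=p_1$, while $r_0>0$ together with $\lambda<\mu$ gives $q_0-\lambda r_0>q_0-\mu r_0$ pointwise. Thus condition \eqref{cond2} holds in the roles $u_0=u$, $u_1=v$, $\lambda_0=\lambda$, $\lambda_1=\mu$. Consequently \eqref{kkk} applies: the function $N(u,v)$ is increasing and $\lim_{x\to b}N(u,v)(x)=\underline N(u,v)=\overline N(u,v)\le\infty$. In particular $\tau_0-\lambda$ is relatively nonoscillatory with respect to $\tau_0-\mu$ if and only if $\lim_{x\to b}N(u,v)(x)<\infty$.

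Next I would transfer this finiteness to the difference $N_v-N_u$ via the sandwich \eqref{lady}, which reads $N_v(x)-N_u(x)-3\le N(u,v)(x)\le N_v(x)-N_u(x)+1$ for the present labelling. The upper bound shows that if $\liminf_{x\to b}(N_v(x)-N_u(x))<\infty$, then $N(u,v)$ is bounded along some sequence $x_n\to b$; since $N(u,v)$ is increasing, boundedness along such a sequence forces $\lim_{x\to b}N(u,v)(x)<\infty$. Conversely, the lower bound gives $N_v(x)-N_u(x)\le N(u,v)(x)+3$, so a finite limit of $N(u,v)$ yields $\liminf_{x\to b}(N_v(x)-N_u(x))<\infty$. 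Hence $\lim_{x\to b}N(u,v)(x)<\infty$ if and only if $\liminf_{x\to b}(N_v(x)-N_u(x))<\infty$, and chaining this with \eqref{zuj2} and the equivalence from the previous paragraph proves the claim.

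The subtlety to watch is the asymmetry between the one-sided $\liminf$ appearing in \eqref{zuj2} and the two-sided requirement that both $\underline N(u,v)$ and $\overline N(u,v)$ be finite in the definition of relative nonoscillation. This is bridged precisely by the monotonicity of $N(u,v)$ supplied by \eqref{cond2} and \eqref{kkk}: monotonicity collapses $\liminf$, $\limsup$, and $\lim$ into a single quantity and upgrades the mere subsequential boundedness implied by a finite $\liminf$ into genuine boundedness. Everything else reduces to the routine estimates in \eqref{lady}.
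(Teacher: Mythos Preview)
Your proof is correct and follows essentially the same route as the paper: both combine the sandwich \eqref{lady} with the monotonicity of $N(u,v)$ from \eqref{cond2}/\eqref{kkk} to identify $\underline N(u,v)=\overline N(u,v)$ with $\liminf_{x\to b}(N_v(x)-N_u(x))$ up to bounded error, and then invoke \eqref{zuj2}. The only difference is presentational: the paper compresses your two implications into the single chain $\liminf(N_v-N_u)-3\le\underline N(u,v)=\overline N(u,v)\le\liminf(N_v-N_u)+1$, whereas you spell out how monotonicity upgrades subsequential boundedness to a genuine limit.
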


\begin{proof}
 Let $u_0$ and $v_0$ be nontrivial real-valued solutions of $(\tau_0-\lambda)u=0$ and $(\tau_0-\mu)v=0$, respectively. Then \eqref{lady}
 and \eqref{kkk} give
	\begin{equation*}
		\liminf_{x\rightarrow \infty}\bigl(N_{v_0}(x)-N_{u_0}(x)\bigr)-3\leq \underline{N}(u_0,v_0) = \overline{N}(u_0,v_0)\leq
		\liminf_{x\rightarrow\infty} \bigl(N_{v_0}(x)-N_{u_0}(x)\bigr)+1
	\end{equation*}
	and hence the statement follows from \eqref{zuj2}.
\end{proof}

Observe that $\dim \operatorname{ran}(E_0((\lambda,\mu)))<\infty$ implies $\dim \operatorname{ran}(E_0((\lambda,\eta)))<\infty$ for all
$\eta\in [\lambda,\mu]$ and hence Lemma~\ref{111} also shows that $\tau_0-\lambda$ is relatively nonoscillatory with respect to
 $\tau_0-\eta$ for all $\eta\in [\lambda,\mu]$.

The next result extends
 \cite[Theorem~4.6]{kt2} to $r_0\neq r_1$.

\begin{theorem}
	\label{KruegerTeschl}
	Let $T_j$ be self-adjoint realizations of $\tau_j$ in $L^2((a,b);r_j)$ with spectral measures
$E_j(\cdot)$ for $j=0,1$. Fix $\lambda,\mu\in\mathbb R$ with $\lambda<\mu$
	and assume that $\dim\operatorname{ran} E_0((\lambda,\mu))<\infty$. If $\tau_0-\lambda$
	is relatively nonoscillatory with respect to $\tau_1-\lambda$ and  $\tau_0-\mu$
	is relatively nonoscillatory with respect to $\tau_1-\mu$, then
	$\dim\operatorname{ran}(E_1((\lambda,\mu)))<\infty$.
\end{theorem}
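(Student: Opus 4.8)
The plan is to connect the two dimension conditions through relative oscillation, using the already-established link between spectral counting and the $N$-functions. By Lemma~\ref{111}, the assumption $\dim\operatorname{ran} E_0((\lambda,\mu))<\infty$ is equivalent to saying that $\tau_0-\lambda$ is relatively nonoscillatory with respect to $\tau_0-\mu$. The goal $\dim\operatorname{ran}(E_1((\lambda,\mu)))<\infty$ is, again by Lemma~\ref{111} applied to $T_1$, equivalent to $\tau_1-\lambda$ being relatively nonoscillatory with respect to $\tau_1-\mu$. So the theorem reduces to a purely oscillation-theoretic statement: from the three relative-nonoscillation hypotheses (the $\tau_0$ internal one, plus the two hypotheses linking $\tau_0$ and $\tau_1$ at the energies $\lambda$ and $\mu$), deduce the $\tau_1$ internal relative-nonoscillation.

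First I would fix nontrivial real-valued solutions $u_0,v_0$ of $(\tau_0-\lambda)u=0$, $(\tau_0-\mu)v=0$ and $u_1,v_1$ of $(\tau_1-\lambda)u=0$, $(\tau_1-\mu)v=0$. The aim is to bound $N(u_1,v_1)(x)$ from above in terms of quantities that are known to be bounded. The natural tool is the chain inequality \eqref{gaga}, which I would use to insert the solutions of $\tau_0$ between those of $\tau_1$: applying \eqref{gaga} twice gives
\begin{equation*}
N(u_1,v_1)(x) \leq N(u_1,u_0)(x) + N(u_0,v_0)(x) + N(v_0,v_1)(x) + 2
\end{equation*}
for all $x\in(a,b)$. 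Each of the three terms on the right is controlled: $N(u_1,u_0)$ is bounded because $\tau_0-\lambda$ is relatively nonoscillatory with respect to $\tau_1-\lambda$ (and relative nonoscillation is symmetric by fact (a) in the text, so $N(u_1,u_0)$ has finite $\limsup$); $N(v_0,v_1)$ is bounded for the same reason using the $\mu$ hypothesis; and $N(u_0,v_0)$ is bounded because $\tau_0-\lambda$ is relatively nonoscillatory with respect to $\tau_0-\mu$, which is exactly what Lemma~\ref{111} extracts from the hypothesis on $E_0$. Taking $\limsup_{x\to b}$ of the displayed inequality therefore yields $\overline N(u_1,v_1)<\infty$.

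It remains to control $\underline N(u_1,v_1)$. The cleanest route is to note that $\tau_1-\lambda$ relatively nonoscillatory with respect to $\tau_1-\mu$ requires both $\overline N$ and $\underline N$ finite, but by \eqref{klotz} we have $\underline N(u_1,v_1) \geq -\overline N(v_1,u_1) - 2$, so a finite $\overline N(v_1,u_1)$ forces $\underline N(u_1,v_1)$ to be bounded below; the lower direction of \eqref{gaga} run symmetrically (bounding $N(v_1,u_1)$ from above) supplies this. Concretely I would also derive
\begin{equation*}
N(u_1,v_1)(x) \geq N(u_1,u_0)(x) + N(u_0,v_0)(x) + N(v_0,v_1)(x) - 2,
\end{equation*}
and the finiteness of $\overline N$ and $\underline N$ for each of the three relatively nonoscillatory pairs then bounds $\underline N(u_1,v_1)$ from below. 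Since relative nonoscillation is independent of the chosen solutions, both limits being finite establishes that $\tau_1-\lambda$ is relatively nonoscillatory with respect to $\tau_1-\mu$, and Lemma~\ref{111} converts this back into $\dim\operatorname{ran}(E_1((\lambda,\mu)))<\infty$.

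The main obstacle is not any single estimate but ensuring the bookkeeping of upper versus lower limits is airtight: \eqref{gaga} mixes $N$-values at the \emph{same} $x$, so the additive $\pm 2$ constants and the interplay between $\limsup$ and $\liminf$ must be tracked carefully. In particular one must verify that every intermediate pair $(u_1,u_0)$, $(u_0,v_0)$, $(v_0,v_1)$ genuinely has \emph{both} $\underline N$ and $\overline N$ finite (not merely one of them), which is exactly the content of Definition~\ref{def:relosc} for relatively nonoscillatory pairs together with the symmetry fact (a); granting that, passing to $\limsup$ in the upper chain and to $\liminf$ in the lower chain closes the argument.
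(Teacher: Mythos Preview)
Your proof is correct and essentially the same as the paper's: both convert the spectral hypothesis and conclusion via Lemma~\ref{111} and then pass the relative nonoscillation from $(\tau_0-\lambda,\tau_0-\mu)$ to $(\tau_1-\lambda,\tau_1-\mu)$ through the two linking hypotheses. The paper packages the chaining step abstractly by invoking the equivalence-relation properties~(a) and~(b) of relative nonoscillation (which themselves are consequences of \eqref{klotz} and \eqref{gaga}), whereas you unpack those properties inline via explicit applications of \eqref{gaga}; the substance is identical.
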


\begin{proof}
    From $\dim\operatorname{ran} E_0((\lambda,\mu))<\infty$ and Lemma~\ref{111} it follows that $\tau_0-\lambda$ is relatively nonoscillatory with respect to
 $\tau_0-\mu$. As $\tau_0-\lambda$
	is relatively nonoscillatory with respect to $\tau_1-\lambda$ and  $\tau_0-\mu$
	is relatively nonoscillatory with respect to $\tau_1-\mu$ by assumption we conclude
with the properties (a) and (b) from above that $\tau_1-\lambda$ is relatively nonoscillatory with respect to
 $\tau_1-\mu$. With the help of Lemma~\ref{111} we now obtain $\dim \operatorname{ran} E_1((\lambda,\mu))<\infty$.

\end{proof}

\section{Essential spectra of Sturm--Liouville operators}

In this section we shall consider the Sturm--Liouville expressions $\tau_j$, $j=0,1$, in \eqref{cheesecake}--\eqref{torte}.
Our main objective is to prove a result on the invariance of the essential spectrum for the self-adjoint realizations of $\tau_j$ in $L^2((a,b);r_j)$.
In this context it seems natural to impose a limit point assumption for the right endpoint $b$; cf. Theorem~\ref{thm:nonoscingap2}~(i).
We start with a useful consequence of Theorem~\ref{KruegerTeschl}, which provides the inclusion of the essential spectra.

\begin{proposition}\label{cor:sigess}
Let $T_j$ be self-adjoint realizations of $\tau_j$ in $L^2((a,b);r_j)$ for $j=0,1$, and assume that $\tau_1-\lambda$ is relatively nonoscillatory with respect to
$\tau_0-\lambda$ for every $\lambda\in\mathbb R\setminus\sigma_\mathrm{ess}(T_0)$. Then $\sigma_\mathrm{ess}(T_1) \subset \sigma_\mathrm{ess}(T_0)$.
\end{proposition}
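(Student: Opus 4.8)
The plan is to argue by complementation, reducing everything to the finite-rank criterion for the essential spectrum together with Theorem~\ref{KruegerTeschl}. Since $\sigma_\mathrm{ess}(T_0)$ is closed, it suffices to show that every $\lambda_0\in\mathbb R\setminus\sigma_\mathrm{ess}(T_0)$ also lies in $\mathbb R\setminus\sigma_\mathrm{ess}(T_1)$; this gives $\mathbb R\setminus\sigma_\mathrm{ess}(T_0)\subset\mathbb R\setminus\sigma_\mathrm{ess}(T_1)$ and hence the asserted inclusion of essential spectra.

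First I would fix $\lambda_0\in\mathbb R\setminus\sigma_\mathrm{ess}(T_0)$ and exploit that this set is open: I can choose real numbers $\lambda<\lambda_0<\mu$ with $[\lambda,\mu]\subset\mathbb R\setminus\sigma_\mathrm{ess}(T_0)$. In particular $(\lambda,\mu)\cap\sigma_\mathrm{ess}(T_0)=\emptyset$, and by the standard characterization of the essential spectrum of a self-adjoint operator this is equivalent to $\dim\operatorname{ran}\bigl(E_0((\lambda,\mu))\bigr)<\infty$. Keeping $\lambda$ and $\mu$ themselves outside $\sigma_\mathrm{ess}(T_0)$ is the small point that makes the hypothesis usable at the endpoints, so I would be careful to select the closed interval inside the open complement.

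Next I would feed the endpoints into the relative oscillation hypothesis. Since $\lambda,\mu\in\mathbb R\setminus\sigma_\mathrm{ess}(T_0)$, the assumption guarantees that $\tau_1-\lambda$ is relatively nonoscillatory with respect to $\tau_0-\lambda$ and that $\tau_1-\mu$ is relatively nonoscillatory with respect to $\tau_0-\mu$. Because relative nonoscillation is symmetric (it defines an equivalence relation, as observed after Definition~\ref{def:relosc}), this is equivalent to saying that $\tau_0-\lambda$ is relatively nonoscillatory with respect to $\tau_1-\lambda$ and that $\tau_0-\mu$ is relatively nonoscillatory with respect to $\tau_1-\mu$, which is precisely the orientation required by Theorem~\ref{KruegerTeschl}. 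Applying that theorem together with $\dim\operatorname{ran}\bigl(E_0((\lambda,\mu))\bigr)<\infty$ yields $\dim\operatorname{ran}\bigl(E_1((\lambda,\mu))\bigr)<\infty$. Finally, this finiteness forces $(\lambda,\mu)\cap\sigma_\mathrm{ess}(T_1)=\emptyset$, and since $\lambda_0\in(\lambda,\mu)$ we conclude $\lambda_0\notin\sigma_\mathrm{ess}(T_1)$, as desired.

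The proof is essentially a short chain of reductions, so there is no deep obstacle; the two places demanding care are purely organizational. One must ensure the chosen endpoints $\lambda,\mu$ avoid $\sigma_\mathrm{ess}(T_0)$ so that the relative nonoscillation hypothesis genuinely applies at them, and one must invoke the symmetry of the relative (non)oscillation relation to reverse the roles of $\tau_0$ and $\tau_1$ and thereby match the exact formulation of Theorem~\ref{KruegerTeschl}. Everything else is the equivalence between finite-dimensionality of the spectral subspace $\operatorname{ran}(E_j((\lambda,\mu)))$ and the absence of essential spectrum in $(\lambda,\mu)$, which is standard.
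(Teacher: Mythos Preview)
Your proposal is correct and follows essentially the same argument as the paper's own proof: pick $\lambda<\lambda_0<\mu$ with $[\lambda,\mu]\subset\mathbb R\setminus\sigma_\mathrm{ess}(T_0)$, use $\dim\operatorname{ran}(E_0((\lambda,\mu)))<\infty$, invoke the relative nonoscillation hypothesis at the endpoints, and apply Theorem~\ref{KruegerTeschl} to conclude $\dim\operatorname{ran}(E_1((\lambda,\mu)))<\infty$. Your explicit remark about using the symmetry of relative nonoscillation to match the orientation in Theorem~\ref{KruegerTeschl} is a point the paper leaves implicit, but the overall route is the same.
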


\begin{proof}
	For $\eta\in \mathbb R\setminus\sigma_{\mathrm{ess}}(T_0)$
	choose $\lambda<\eta<\mu$ such that $[\lambda,\mu]\subset\mathbb R\setminus\sigma_{\mathrm{ess}}(T_0)$. Then we have
	$\dim\operatorname{ran} E_0((\lambda,\mu))<\infty$ and it follows from the assumption that
	$\tau_0-\lambda$ is relatively nonoscillatory with respect to $\tau_1-\lambda$ and $\tau_0-\mu$ is relatively nonoscillatory with respect to $\tau_1-\mu$.
	Now Theorem~\ref{KruegerTeschl} implies
	$\dim\operatorname{ran} E_1((\lambda,\mu))<\infty$ which leads to $\eta\in\mathbb R\setminus \sigma_{\mathrm{ess}}(T_1)$.
\end{proof}

\noindent Next, we obtain a criterion for two Sturm--Liouville differential expressions being relatively nonoscillatory with respect to each other involving all coefficients. The special case $r_0=r_1$ was treated in \cite{kt2}.

\begin{theorem}\label{thm:nonoscingap2}
Let $T_j$ be self-adjoint realizations of $\tau_j$ in $L^2((a,b);r_j)$ for $j=0,1$, and assume the following conditions at the endpoint $b$:
\begin{itemize}
	\item[($\alpha$)] $\lim\limits_{x\rightarrow b} \frac{r_1(x)}{r_0(x)}=1$, $\lim\limits_{x\rightarrow b} \frac{p_1(x)}{p_0(x)}=1$, $\lim\limits_{x\rightarrow b} \frac{q_1(x)-q_0(x)}{r_0(x)}=0$;
	\item[($\beta$)] $q_0/r_0$ is bounded near $b$.
\end{itemize}
Then the following assertions hold:
\begin{itemize}
 \item [{\rm (i)}] $\tau_0$ is limit point at $b$ if and only if $\tau_1$ is limit point  at $b$;
 \item [{\rm (ii)}] $\sigma_\mathrm{ess}(T_0)=\sigma_\mathrm{ess}(T_1)$;
 \item [{\rm (iii)}] $T_0$ and $T_1$ are semibounded from below;
 \item [{\rm (iv)}] $\tau_1-\lambda$ is relatively nonoscillatory with respect to $\tau_0-\lambda$ for every
$\lambda \in \mathbb R \setminus\sigma_{\mathrm{ess}}(T_0)$.
\end{itemize}
\end{theorem}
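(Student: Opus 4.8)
The plan is to prove the four assertions in the order (iii), (iv), (ii), (i). Semiboundedness (iii) and the relative nonoscillation (iv) are the substantial points; (ii) will be a short deduction from (iv) via Proposition~\ref{cor:sigess}, while (i) needs a separate argument.

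\emph{(iii).} I would obtain semiboundedness directly from disconjugacy. By ($\beta$) there is $M>0$ with $q_0\geq -Mr_0$ near $b$, so $q_0-\lambda r_0\geq 0$ near $b$ whenever $\lambda\leq -M$. For such $\lambda$ any real solution $u$ of $(\tau_0-\lambda)u=0$ satisfies $(p_0u')'=(q_0-\lambda r_0)u\geq 0$ wherever $u>0$; hence between two consecutive zeros $x_0<x_1$ of $u$ the function $p_0u'$ would be nondecreasing with $(p_0u')(x_0)\geq 0\geq (p_0u')(x_1)$, forcing $u\equiv 0$, a contradiction. Thus $u$ has at most one zero near $b$, $\tau_0-\lambda$ is nonoscillatory, and by the remark after \eqref{zuj} the operator $T_0$ is semibounded from below. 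Writing $q_1/r_1=\bigl((q_1-q_0)/r_0+q_0/r_0\bigr)(r_0/r_1)$ and invoking ($\alpha$) shows $q_1/r_1$ is bounded near $b$ as well, so the same argument applies to $T_1$.

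\emph{(iv), reduction.} This is the heart of the matter. Fix $\lambda\in\mathbb R\setminus\sigma_{\mathrm{ess}}(T_0)$; since this set is open, pick $\varepsilon>0$ with $[\lambda-\varepsilon,\lambda+\varepsilon]\cap\sigma_{\mathrm{ess}}(T_0)=\emptyset$, so that $\dim\operatorname{ran}E_0((\lambda-\varepsilon,\lambda+\varepsilon))<\infty$ and, by Lemma~\ref{111} and the remark following it, $\tau_0-(\lambda-\varepsilon)$, $\tau_0-\lambda$ and $\tau_0-(\lambda+\varepsilon)$ are pairwise relatively nonoscillatory. As relative nonoscillation is an equivalence relation (noted after Definition~\ref{def:relosc}), it suffices to prove that $\tau_1-\lambda$ is relatively nonoscillatory with respect to $\tau_0-(\lambda+\varepsilon)$. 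The idea is to trap the oscillation of $\tau_1-\lambda$ between those of $\tau_0-(\lambda\pm\varepsilon)$ by comparing Prüfer angles through Lemma~\ref{lemmayy}. The purpose of the $\varepsilon$-shift is that, by ($\alpha$), with $Q_1:=q_1-\lambda r_1$,
\[
\frac{(q_0-(\lambda-\varepsilon)r_0)-Q_1}{r_0}\longrightarrow \varepsilon,\qquad \frac{(q_0-(\lambda+\varepsilon)r_0)-Q_1}{r_0}\longrightarrow -\varepsilon,
\]
so that $q_0-(\lambda+\varepsilon)r_0<Q_1<q_0-(\lambda-\varepsilon)r_0$ near $b$; in the potential the comparison inequalities of Lemma~\ref{lemmayy} thus hold with room to spare.

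\emph{The main obstacle.} The delicate point, which I expect to be the crux, is the coefficient $p$: condition ($\alpha$) only yields $p_1/p_0\to 1$, not a pointwise inequality $p_0\geq p_1$ or $p_1\geq p_0$ as Lemma~\ref{lemmayy} demands. In the Prüfer-angle difference the $p$-contribution $\bigl(\tfrac1{p_1}-\tfrac1{p_0}\bigr)\cos^2\theta_{u_1}$ acts on a different angle than the favourable potential term $\sin^2\theta_{u_0}$, it need not carry the sign supplied by the shift at the crossings $\theta_{u_1}-\theta_{u_0}\in\pi\mathbb Z$, and the indefinite factor $\tfrac1{p_1}-\tfrac1{p_0}=o(1/p_0)$ is in general not integrable against the oscillation; a naive multiplicative modification $p_0\rightsquigarrow(1\pm\delta)p_0$ fails because a fixed-factor change of $p$ is itself relatively oscillatory. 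I would control this by the effective-Prüfer-angle estimate of \cite{kt2}: with the integrating factor $g$ from the proof of Lemma~\ref{lemmayy} one writes $\delta'=f+h\delta$ for $\delta=\theta_{u_1}-\theta_{u_0}-k\pi$ and estimates the net increment of $\delta$ across each oscillation, so that $p_1/p_0-1\to 0$ prevents accumulation of sign changes and traps the zero count of $u_1$ between those of solutions of $\tau_0-(\lambda\pm\varepsilon)$ up to an additive constant. The only new feature relative to \cite{kt2} is the weight: the extra term $\lambda(r_1-r_0)/r_0\to 0$ created by $r_0\neq r_1$ is harmless and has already been absorbed into the shift. Combining this two-sided bound with \eqref{lady} and the relative nonoscillation of $\tau_0-(\lambda\pm\varepsilon)$ with respect to $\tau_0-\lambda$ yields (iv).

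\emph{(ii) and (i).} Assertion (ii) is then immediate: (iv) and Proposition~\ref{cor:sigess} give $\sigma_{\mathrm{ess}}(T_1)\subset\sigma_{\mathrm{ess}}(T_0)$, and since ($\alpha$),($\beta$) are symmetric in the indices (one checks $r_0/r_1\to 1$, $p_0/p_1\to 1$, $(q_0-q_1)/r_1\to 0$, and $q_1/r_1$ bounded as above), the same reasoning with $\tau_0$ and $\tau_1$ interchanged yields the reverse inclusion, hence equality. For (i) I would argue separately, since the limit-point/limit-circle dichotomy is not visible in the essential spectrum. Choosing $\lambda$ so negative that $\tau_0-\lambda$ and $\tau_1-\lambda$ are nonoscillatory near $b$ (as in (iii)), each expression admits a principal and a nonprincipal solution near $b$, and $\tau_j$ is limit circle at $b$ if and only if its nonprincipal solution lies in $L^2((a,b);r_j)$. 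Because $r_1/r_0\to 1$, square-integrability against $r_0$ and against $r_1$ coincide near $b$, so it remains to compare the growth of the corresponding solutions of the two asymptotically equal equations; this comparison, again resting on ($\alpha$), shows that the nonprincipal solution of $\tau_0-\lambda$ is square-integrable precisely when that of $\tau_1-\lambda$ is, which gives (i).
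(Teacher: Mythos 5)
Your overall architecture ((iii) by disconjugacy, (iv) as the core, (ii) from (iv) via Proposition~\ref{cor:sigess} plus the symmetry of ($\alpha$),($\beta$), (i) separately) matches the paper's, and your (iii) and (ii) are fine. The interesting divergence is in (iv). The paper does not run an $\varepsilon$-shift trapping argument at all: for each fixed $\lambda\notin\sigma_{\mathrm{ess}}(T_0)$ it sets $\widetilde q_1:=q_1+\lambda r_0-\lambda r_1$ and observes that $\widetilde\tau_1=\frac{1}{r_0}\bigl(-\frac{\mathrm d}{\mathrm dx}p_1\frac{\mathrm d}{\mathrm dx}+\widetilde q_1\bigr)$ has the \emph{same} weight $r_0$ as $\tau_0$, satisfies $(q_0-\widetilde q_1)/r_0\to 0$, and that $(\widetilde\tau_1-\lambda)u=\frac{r_0}{r_1}\cdot\ldots$ — wait, $\frac{r_0}{r_1}(\widetilde\tau_1-\lambda)u=(\tau_1-\lambda)u$ — so the two equations share solutions and hence the same Wronskian counts. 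This reduces (iv) in one line to the equal-weight result \cite[Lemma~4.7]{kt2}, which is cited verbatim and is where all the hard analysis for $p_1/p_0\to1$ lives. Your plan instead absorbs the weight into an $\varepsilon$-shift and proposes to re-run the effective-Prüfer-angle estimates of \cite{kt2} in the modified setting. That is morally the same mathematics, but you correctly identify the $p$-coefficient as the crux and then leave precisely that step as a sketch ("estimates the net increment of $\delta$ across each oscillation\dots"); as written this is not a proof but a pointer to one, whereas the paper's substitution makes the citation legitimate without any re-derivation. If you want a self-contained route, you should either carry out those estimates or, better, adopt the $\widetilde q_1$ substitution so that the equal-weight lemma applies as a black box.

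The genuine gap is in (i). You assert that "this comparison, again resting on ($\alpha$), shows that the nonprincipal solution of $\tau_0-\lambda$ is square-integrable precisely when that of $\tau_1-\lambda$ is," but no mechanism is given, and comparing growth rates of nonprincipal solutions of two equations whose coefficients merely agree asymptotically in ratio is not routine — small relative perturbations of $q$ can alter solution asymptotics substantially. The paper's proof of (i) uses ($\beta$) (and ($\beta'$)) in an essential way at exactly this point: since $q_j/r_j$ is bounded near $b$, \cite[Corollary~7.4.1]{zettl} lets one discard the potential entirely and pass to $\widehat\tau_j=\frac{1}{r_j}\bigl(-\frac{\mathrm d}{\mathrm dx}p_j\frac{\mathrm d}{\mathrm dx}\bigr)$, whose fundamental system $u_j(x)=\int_c^x p_j^{-1}$, $v_j=1$ is explicit; the two-sided bounds $p_0/2<p_1<3p_0/2$ and $r_0/2<r_1<3r_0/2$ then give $L^2((c,b);r_0)=L^2((c,b);r_1)$ and $\tfrac23 u_1\le u_0\le 2u_1$, so the number of $L^2$-solutions coincides. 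Your sketch never strips off $q$ and never explains how the nonprincipal solutions are to be compared, so as it stands assertion (i) is not established.
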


Observe that by Theorem~\ref{thm:nonoscingap2}~(i) $\tau_0$ is limit circle (or regular) at $b$ if and only if $\tau_1$ is limit circle (or regular) at $b$,
in which case $\sigma_\mathrm{ess}(T_0)=\sigma_\mathrm{ess}(T_1)=\emptyset$.

\begin{remark}
	\label{dontodent}
	Observe that the conditions ($\alpha$) and ($\beta$) in Theorem~\ref{thm:nonoscingap2} are equivalent to the conditions
	\begin{itemize}
	\item[($\alpha'$)] $\lim\limits_{x\rightarrow b} \frac{r_0(x)}{r_1(x)}=1$, $\lim\limits_{x\rightarrow b} \frac{p_0(x)}{p_1(x)}=1$, $\lim\limits_{x\rightarrow b} \frac{q_0(x)-q_1(x)}{r_1(x)}=0$;
	\item[($\beta'$)] $q_1/r_1$ is bounded near $b$.
\end{itemize}
  In fact, this follows immediately from
	\begin{equation*}
		\frac{q_0-q_1}{r_1} = -\frac{q_1-q_0}{r_0}\cdot\frac{r_0}{r_1},\qquad \frac{q_1}{r_1} = \left(\frac{q_1-q_0}{r_0} + \frac{q_0}{r_0}\right)\frac{r_0}{r_1}.
	\end{equation*}
	and hence the roles of $\tau_0$ and $\tau_1$ can be interchanged in the Theorem~\ref{thm:nonoscingap2}.
\end{remark}

\begin{proof}[Proof of Theorem~\ref{thm:nonoscingap2}]
(i) By assumption ($\alpha$) there is $c\in (a,b)$ such that
\begin{equation}
	\label{gme}
	p_0/2 < p_1 < 3p_0/2\quad\text{and}\quad r_0/2 < r_1 < 3r_0/2
\end{equation}
a.\,e.\ on $(c,b)$. This yields $L^2((c,b); r_0) = L^2((c,b); r_1)$.
Since $q_j/r_j$, $j=0,1$, is bounded near $b$ by ($\beta$) and ($\beta'$) (see Remark~\ref{dontodent}), the differential expression $\tau_j$ is in the limit
point case at $b$ if and only if
\begin{equation*}
	 \widehat\tau_j = \frac{1}{r_j} \left(-\frac{\mathrm d}{\mathrm d x}p_j \frac{\mathrm d}{\mathrm d x}\right)
\end{equation*}
is in the limit point case at $b$; cf. \cite[Corollary~7.4.1]{zettl}. Here $\widehat\tau_j u=0$ is explicitly solvable with a fundamental system given by
\begin{equation*}
	u_j(x) = \int_c^x \frac{1}{p_j(t)}\,\mathrm d t,\qquad v_j(x)=1.
\end{equation*}
One has $v_0 = v_1$ and $2/3 u_1\leq u_0\leq 2u_1$ by \eqref{gme}. Hence the number of $L^2$-solutions near $b$ is the same for the differential expressions
$\widehat \tau_0$, $\widehat \tau_1$, $\tau_0$, and $\tau_1$. In particular, this implies (i).
\vskip 0.2cm \noindent
(ii)--(iv)
By condition ($\beta$) there is $d\in (a,b)$ such that
\begin{equation*}
	\lambda_d:=\operatorname*{ess\,inf}_{x\in (d,b)}\frac{q_0(x)}{r_0(x)}>-\infty
\end{equation*}
and, thus, $q_0-\lambda_d r_0\geq 0$ a.\,e.\ on $(d,b)$. This implies that $\tau_0-\lambda_d$ is nonoscillatory (see, e.\,g.\ \cite[Lemma~7.4.1]{zettl}) and hence
$T_0$ is semibounded from below.

Let $\lambda\in\mathbb R\setminus\sigma_{\mathrm{ess}}(T_0)$ and consider the differential expression
\begin{equation*}
	\widetilde \tau_1 = \frac{1}{r_0} \left(-\frac{\mathrm d}{\mathrm d x}p_1 \frac{\mathrm d}{\mathrm d x} + \widetilde q_1\right), \quad \text{where}\quad\widetilde q_1 := q_1 +\lambda r_0 - \lambda r_1,
\end{equation*}
on $(a,b)$. Then $r_0(x)^{-1}(q_0(x)-\widetilde q_1(x)) \rightarrow 0$ as $x\rightarrow b$. Therefore, by \cite[Lemma~4.7]{kt2} applied to $\tau_0-\lambda$ and $\widetilde \tau_1-\lambda$
the differential expression $\widetilde \tau_1-\lambda$ is relatively nonoscillatory with respect to $\tau_0-\lambda$. Because of
\begin{equation*}
	\frac{r_0}{r_1} (\tilde \tau_1-\lambda) u = (\tau_1-\lambda)u
\end{equation*}
the differential equations $(\tau_1-\lambda)u=0$ and $(\widetilde \tau_1-\lambda)u=0$ share the same solutions. This implies that $\tau_1-\lambda$ is relatively nonoscillatory with respect to $\tau_0-\lambda$ for all $\lambda\in\mathbb R\setminus\sigma_{\mathrm{ess}}(T_0)$ and hence
Proposition~\ref{cor:sigess} yields $\sigma_{\mathrm{ess}}(T_1)\subset \sigma_{\mathrm{ess}}(T_0)$.
The same reasoning with the roles of $\tau_0$ and $\tau_1$ reversed together with Remark~\ref{dontodent} shows the  semiboundedness of
$T_1$ and the inclusion $\sigma_{\mathrm{ess}}(T_0)\subset \sigma_{\mathrm{ess}}(T_1)$.
\end{proof}

Note that the relative nonoscillatory property in Theorem~\ref{thm:nonoscingap2}~(iv) does not apply to boundary points of the essential spectrum
and hence no additional information on the possible accumulation of eigenvalues at the boundary of the essential spectrum can be directly obtained.

The following is a straightforward extension of \cite[Theorem~2.1]{kt3} to the case $r_0\neq r_1$. For its formulation suppose that $(\tau_0-\lambda)u=0$ has a positive solution and let $u_0$ be the
corresponding minimal (principal) positive solution of $(\tau_0-\lambda) u_0= 0$ near $b$, that is,
\begin{equation*}
\int_c^b \frac{dt}{p_0(t)u_0(t)^2} = \infty
\end{equation*}
for $c\in(a,b)$.
A second linearly independent solution $v_0$ satisfying $W(u_0,v_0)=1$ is given by d'Alembert's formula,
see, e.g., \cite{BHS20},
\begin{equation}\label{eq:dAl}
v_0(x) := u_0(x) \int_c^x \frac{dt}{p_0(t) u_0(t)^2}.
\end{equation}

\begin{theorem}\label{thm:gu}
Let $\lambda$ denote the minimum of the spectrum of $T_0$
(see Theorem \ref{thm:nonoscingap2}), suppose that $\tau_0-\lambda$ has a positive solution near $b$, and let $u_0$ be a minimal positive solution
near $b$.
Define $v_0$ by d'Alembert's formula \eqref{eq:dAl} and abbreviate
\begin{equation}\label{GritarI}
 \begin{split}
  \Delta(x):=  p_0(x) v_0(x)^2 \bigg(& u_0(x)^2 \big(  q_1(x) -q_0(x) - \lambda( r_1(x) -r_0(x))\big)\\
&+ (p_0(x) u_0'(x))^2 \frac{p_1(x)-p_0(x)}{p_1(x) p_0(x)} \bigg).
 \end{split}
\end{equation}
In addition, suppose
\begin{equation*}
\lim_{x\to b} v_0(x)\, p_0(x) u_0'(x) \frac{p_1(x)-p_0(x)}{p_1(x)} = \lim_{x\to b} \frac{p_1(x)-p_0(x)}{p_1(x)} = 0.
\end{equation*}
Then $\tau_1-\lambda$ is oscillatory if
\begin{equation*}
\limsup_{x\to b} \Delta(x) < -\frac{1}{4}
\end{equation*}
and nonoscillatory if
\begin{equation*}
\liminf_{x\to b} \Delta(x) > -\frac{1}{4}.
\end{equation*}
\end{theorem}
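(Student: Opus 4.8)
The plan is to reduce the oscillation question for $\tau_1-\lambda$ to a relative oscillation statement that can be decided by a Kneser-type criterion applied to the Prüfer angle difference. Since $\tau_0-\lambda$ has a positive (hence nonoscillatory) solution near $b$, Lemma~\ref{MyLittlePony} tells us that $\tau_1-\lambda$ is oscillatory if and only if it is relatively oscillatory with respect to $\tau_0-\lambda$. So the entire problem becomes one of deciding whether $N(u_1,u_0)(x)$ (equivalently $N(u_0,u_1)(x)$, by \eqref{klotz}) stays bounded as $x\to b$, where $u_1$ is any nontrivial solution of $(\tau_1-\lambda)u=0$. The natural step is to change variables so that the pair $(u_0,v_0)$ becomes a standard basis: since $W(u_0,v_0)=1$, the map sending a solution to its coordinates in the basis $(u_0,v_0)$ conjugates $(\tau_0-\lambda)$ to the trivial expression $-y''=0$ and transforms $(\tau_1-\lambda)$ into a new Sturm--Liouville expression on a transformed independent variable.

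Concretely, I would introduce the Liouville-type coordinate $s = s(x) := \int_c^x (p_0 u_0^2)^{-1}\,dt$, which maps $(c,b)$ onto $(0,\infty)$ precisely because $u_0$ is the minimal (principal) solution. In this coordinate $v_0$ becomes $u_0\cdot s$, and a computation using d'Alembert's formula rewrites the modified Wronskian $W(u_0,u_1)$ and $W(v_0,u_1)$ as the components of $u_1$ in the $(u_0,v_0)$-frame. Writing $u_1$ in these coordinates yields a second-order equation of the form $-\ddot y = Q(s)\,y + (\text{lower-order terms involving } p_1-p_0)$, where $Q$ is built from the quantity in \eqref{GritarI}: the point of the factor $p_0 v_0^2(\cdots)$ in $\Delta$ is exactly that, after the change of variables and after absorbing the $\frac{p_1-p_0}{p_1}$ corrections, the transformed potential has leading behavior $\Delta(x(s))$ with the critical constant $-\tfrac14$ being the classical Kneser threshold for $-\ddot y = (c/s^2)y$. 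The two limiting hypotheses on $v_0\,p_0u_0'\,\frac{p_1-p_0}{p_1}$ and on $\frac{p_1-p_0}{p_1}$ are precisely what is needed to guarantee that the $p$-perturbation contributes only negligibly to the transformed equation, so that the decisive term is $\Delta$.

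Once the equation is in this normalized form, the oscillation dichotomy follows from the classical Kneser criterion for $-\ddot y = Q(s)y$ near $s=\infty$: oscillatory if $\limsup_{s\to\infty} s^2 Q(s) > \tfrac14$ and nonoscillatory if $\liminf_{s\to\infty} s^2 Q(s) < \tfrac14$ (the sign flip relative to $-\tfrac14$ coming from the placement of $\lambda$ on the appropriate side). I would invoke Corollary~\ref{Scomparison} (Sturm comparison) to compare the transformed $\tau_1-\lambda$ against the explicitly solvable Euler-type comparison equations $-\ddot y = (\kappa/s^2)y$ with $\kappa$ slightly above and slightly below the critical value, concluding oscillation or nonoscillation accordingly; translating back via Lemma~\ref{MyLittlePony} gives the statement about $\tau_1-\lambda$.

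The main obstacle I anticipate is \emph{controlling the error terms produced by the change of variables}, specifically showing that the $\frac{p_1-p_0}{p_1}$ contributions and the cross terms involving $p_0u_0'$ can be lumped into a remainder that does not disturb the comparison against $\kappa/s^2$. This is where the two stated limit hypotheses do the real work, but verifying that they suffice — i.e.\ that $s^2$ times the remainder tends to $0$ in the transformed variable, so the critical constant is governed solely by $\limsup/\liminf \Delta$ — requires a careful asymptotic bookkeeping of how $p_0u_0^2$, $v_0$, and $s$ interact near $b$. Making that estimate rigorous, rather than the final Kneser comparison, is the technical heart of the argument.
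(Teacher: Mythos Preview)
Your outline is correct in spirit and would eventually work, but it is far more elaborate than what the paper actually does. The paper's proof is a two-line reduction: since $(\tau_j-\lambda)u=0$ is equivalent to $-(p_j u')' + (q_j-\lambda r_j)u=0$, one may replace $q_j$ by $q_j-\lambda r_j$ and thereby reduce to the case $\lambda=0$. In that case the weights $r_0,r_1$ no longer appear in the differential equations at all, so the situation is literally covered by \cite[Theorem~2.1]{kt3} (the case $r_0=r_1$), whose conclusion is exactly the stated dichotomy with the function $\Delta$ in \eqref{GritarI}. Nothing further is needed.

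What you sketch---the Liouville change of variables $s=\int(p_0u_0^2)^{-1}$, the reduction to an Euler comparison, the handling of the $p$-perturbation via the two limit hypotheses---is essentially the \emph{proof} of \cite[Theorem~2.1]{kt3} itself (the effective Pr\"ufer angle method). So you are not wrong, but you are re-deriving the cited black box instead of invoking it. The only genuinely new content in Theorem~\ref{thm:gu} relative to \cite{kt3} is allowing $r_0\neq r_1$, and the trivial observation $q_j\mapsto q_j-\lambda r_j$ dispatches that immediately. The technical concern you flag at the end (controlling the $s^2$-scaled remainders) is real, but it was already handled in \cite{kt3} and need not be redone here.
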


\begin{proof}
This is immediate from \cite[Theorem~2.1]{kt3} since the transformation $q_j \to q_j -\lambda r_j$ reduces everything to the case $\lambda =0$ in which case
$r_j$ becomes irrelevant.
\end{proof}

In the following  we show a variant of Kneser's classical result \cite{Kneser93} (see also \cite[Theorem~9.42 and Corollary~9.43]{tes}).
To this end we recall the iterated logarithm $\log_n(x)$ which is defined recursively via
\[
\log_0(x) := x \quad \text{and}\quad \log_n(x) := \log(\log_{n-1}(x)).
\]
Here we use the convention $\log(x):=\log|x|$ for negative values of $x$. Then
$\log_n(x)$ will be continuous for $x>\mathrm{e}_{n-1}$ and positive for $x>\mathrm{e}_n$, where
$\mathrm{e}_{-1} :=-\infty$ and $\mathrm{e}_n :=\mathrm{e}^{\mathrm{e}_{n-1}}$. Abbreviate further
\[
L_n(x) := \frac{1}{\log_{n+1}'(x)} = \prod_{j=0}^n \log_j(x)\quad\text{and}\quad
Q_n(x) := -\frac{1}{4} \sum_{j=0}^{n-1} \frac{1}{L_j(x)^2}.
\]
Here the usual convention that $\sum_{j=0}^{-1} \equiv 0$ is used, that  is, $Q_0(x)=0$.
In what follows we consider as the underlying interval the interval $(a,\infty)$.

\begin{theorem}\label{blaubaer}
	Consider the Sturm--Liouville differential expression $\tau_1$ on $(a,\infty)$ and assume, in addition, that the limits
	\begin{equation}\label{BadBunny}
		q_\infty:=\lim_{x\rightarrow \infty} q_1(x),\quad p_\infty:=\lim_{x\rightarrow \infty} p_1(x),\quad r_\infty:=\lim_{x\rightarrow \infty} r_1(x)
	\end{equation}
	exist in $\mathbb R$ such that $p_\infty>0$ and $r_\infty>0$. For $n\in\mathbb N_0$ abbreviate
	\begin{align} \label{GritarII}
\widetilde{\Delta}(x):= & L_n(x)^2 \left(\frac{q_1(x)}{p_\infty}- Q_n(x) -\frac{q_\infty}{p_\infty r_\infty}r_1(x)  + \frac{1}{4}
\bigg(\sum_{j=0}^{n-1} \frac{1}{L_j(x)}\bigg)^2
\left(1-\frac{p_\infty}{p_1(x)}\right) \right).
	\end{align}
	Then $\tau_1$ is in the limit-point case at $\infty$,
	every self-adjoint realisation $T_1$ of $\tau_1$ in $L^2((a,\infty);r_1)$ is semibounded from below, and	
	\begin{equation}\label{Farruko}
		\sigma_{\mathrm{ess}}(T_1)=[q_\infty/r_\infty,\infty).
	\end{equation}
	Furthermore, the following assertions hold:
	\begin{enumerate}
		\item If
		      \begin{equation}
			      \label{kneserI}
			      \limsup_{x\rightarrow \infty}{} \widetilde{\Delta}(x) < -\frac{1}{4},
		      \end{equation}
		      then $\sigma(T_1)\cap (-\infty, q_\infty/r_\infty)$ consists of infinitely many simple eigenvalues which accumulate at $q_\infty/r_\infty$;
		\item If
		      \begin{equation}
			      \label{kneserII}
			      \liminf_{x\rightarrow \infty}{} \widetilde{\Delta}(x)^2 > -\frac{1}{4},
		      \end{equation}
		      then $\sigma(T_1)\cap (-\infty, q_\infty/r_\infty)$ consists of finitely many simple eigenvalues.
	\end{enumerate}
\end{theorem}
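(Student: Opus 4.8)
The plan is to reduce Theorem~\ref{blaubaer} to the perturbation and oscillation machinery already assembled, in particular Theorem~\ref{thm:nonoscingap2} and Theorem~\ref{thm:gu}, by introducing a suitable comparison expression with constant coefficients. First I would define the model expression
\[
\tau_0 := \frac{1}{r_\infty}\left(-\frac{\mathrm d}{\mathrm d x} p_\infty \frac{\mathrm d}{\mathrm d x} + q_\infty\right)
\]
on $(a,\infty)$, whose associated self-adjoint realisation $T_0$ is, after the substitution $t = x/\sqrt{p_\infty}$ (or an analogous rescaling), a shifted and scaled free Schr\"odinger operator with essential spectrum $[q_\infty/r_\infty,\infty)$ and no eigenvalues below it. The hypotheses \eqref{BadBunny} say precisely that $r_1\to r_\infty$, $p_1\to p_\infty$ and $q_1\to q_\infty$, so conditions ($\alpha$) and ($\beta$) of Theorem~\ref{thm:nonoscingap2} are satisfied for this pair $\tau_0,\tau_1$: the ratios $r_1/r_\infty$, $p_1/p_\infty$ tend to $1$, the difference $(q_1-q_\infty)/r_\infty$ tends to $0$, and $q_\infty/r_\infty$ is constant hence bounded. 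Theorem~\ref{thm:nonoscingap2} then immediately delivers that $\tau_1$ is limit point at $\infty$ (since the constant-coefficient $\tau_0$ is), that $T_1$ is semibounded from below, and that $\sigma_{\mathrm{ess}}(T_1)=\sigma_{\mathrm{ess}}(T_0)=[q_\infty/r_\infty,\infty)$, which is exactly \eqref{Farruko}.

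For the two accumulation assertions I would set $\lambda := q_\infty/r_\infty = \inf\sigma_{\mathrm{ess}}(T_1) = \inf\sigma(T_1)$ (the last equality because $T_1$ is semibounded and the spectrum below $q_\infty/r_\infty$ consists only of eigenvalues) and apply Theorem~\ref{thm:gu} with this $\lambda$ to the \emph{constant-coefficient} comparison expression as the unperturbed operator. Here the key point is that for $\tau_0 - \lambda$ the principal solution near $\infty$ and its d'Alembert companion are explicit: after rescaling, $\tau_0-\lambda$ reduces to $-u''=0$, whose minimal positive solution is constant and whose second solution is linear, but to capture the finer logarithmic thresholds $Q_n$ one must instead compare against the \emph{critically perturbed} expression whose principal solution is $u_0(x) = \sqrt{L_n(x)}$ (up to the rescaling by $\sqrt{p_\infty}$). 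This is the standard Kneser/Hille hierarchy: the iterated-logarithm potential $Q_n$ is exactly the borderline potential at the $n$-th level, and its principal solution behaves like $(L_n(x))^{1/2}$. I would verify that with this choice the auxiliary limits required in Theorem~\ref{thm:gu},
\[
\lim_{x\to\infty} v_0(x)\,p_0(x)u_0'(x)\frac{p_1(x)-p_0(x)}{p_1(x)} = \lim_{x\to\infty}\frac{p_1(x)-p_0(x)}{p_1(x)} = 0,
\]
hold because $p_1\to p_\infty = p_0$ and the product $v_0 p_0 u_0'$ grows only logarithmically, and then compute that the quantity $\Delta(x)$ of \eqref{GritarI} reduces, after inserting $u_0=\sqrt{L_n}$, $v_0 = \sqrt{L_n}\int \mathrm dt/(p_0 L_n)$, to precisely $\widetilde\Delta(x)$ of \eqref{GritarII}. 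The oscillation/nonoscillation dichotomy of Theorem~\ref{thm:gu} at the threshold $-\tfrac14$ then translates, via Lemma~\ref{MyLittlePony} and Lemma~\ref{111} applied at $\lambda = q_\infty/r_\infty$, into accumulation respectively non-accumulation of eigenvalues at $q_\infty/r_\infty$, giving (i) and (ii); simplicity of the eigenvalues is automatic in the limit-point case by the standard theory.

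The main obstacle I expect is the explicit computation identifying $\Delta(x)$ with $\widetilde\Delta(x)$ for the iterated-logarithm comparison solution. This requires carefully expanding $\Delta$ from \eqref{GritarI} with $q_0 = q_\infty$, $r_0 = r_\infty$, $p_0 = p_\infty$ but $u_0,v_0$ the nontrivial solutions of the \emph{critical} comparison equation $(\tau_0^{\mathrm{crit}} - \lambda)u_0 = 0$ whose coefficient differs from $\tau_0$ by the borderline term $Q_n$; tracking the asymptotics of $u_0^2 = L_n(1+o(1))$, of $p_0 u_0' = \tfrac12 L_n'/\sqrt{L_n}\cdot(\cdots)$, and of $p_0 v_0^2$, and checking that the cross terms assemble into the three pieces $q_1/p_\infty - Q_n - (q_\infty/p_\infty r_\infty)r_1$ and the $p$-correction $\tfrac14(\sum L_j^{-1})^2(1-p_\infty/p_1)$, is the computational heart of the argument. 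A secondary subtlety is the apparent square in the hypothesis $\widetilde\Delta(x)^2 > -\tfrac14$ of \eqref{kneserII}, which cannot be literally the intended inequality (a square is nonnegative); I would read this as the liminf condition $\liminf_{x\to\infty}\widetilde\Delta(x) > -\tfrac14$ matching the nonoscillation branch of Theorem~\ref{thm:gu}, and state it accordingly. Everything else -- the reduction to $\lambda=0$ absorbing $r_j$, the limit-point conclusion, and the semiboundedness -- follows directly from the cited results and needs no new ideas.
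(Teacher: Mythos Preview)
Your plan matches the paper's approach: compare $\tau_1$ against an explicit model with $p_0=p_\infty$, $r_0=r_\infty$ and $q_0=q_\infty+p_\infty Q_n$, invoke Theorem~\ref{thm:nonoscingap2} for the limit-point/semibounded/essential-spectrum claims, then compute $\Delta$ in Theorem~\ref{thm:gu} and identify it with $\widetilde\Delta$. Three corrections are needed.

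First, the index is off by one: the exact principal solution of $-u''+Q_n u=0$ is $u_0(x)=\sqrt{L_{n-1}(x)}$ (with the convention $L_{-1}\equiv 1$), not $\sqrt{L_n(x)}$, and then $v_0(x)=p_\infty^{-1}\sqrt{L_{n-1}(x)}\,\log_n(x)$. With these one gets $p_0 v_0^2 u_0^2 = p_\infty^{-1}L_n(x)^2$, which is what produces the prefactor $L_n(x)^2$ in $\widetilde\Delta$; your choice $\sqrt{L_n}$ would yield $L_{n+1}^2$ instead and the identity $\Delta=\widetilde\Delta$ would fail. These are exact solutions, not asymptotic ones, so no $o(1)$ bookkeeping is needed.

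Second, the passage from (non)oscillation of $\tau_1-\lambda$ to the eigenvalue count is just \eqref{zuj} applied to $T_1$; Lemmas~\ref{MyLittlePony} and~\ref{111} concern intervals $(\lambda,\mu)$ and do not directly give information about $(-\infty,\lambda)$. Also drop the claim $\inf\sigma(T_1)=q_\infty/r_\infty$: this is false exactly in case~(i), and in any event Theorem~\ref{thm:gu} needs $\lambda=\inf\sigma(T_0)$, which is what you should verify for the critical comparison operator.

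Finally, the paper takes the critical $q_0=q_\infty+p_\infty Q_n$ from the outset rather than switching comparison expressions midway; since $Q_n\to 0$, conditions $(\alpha)$ and $(\beta)$ are still satisfied and this is slightly cleaner. Your reading of the spurious square in \eqref{kneserII} is correct.
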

\begin{proof}
The property of $\tau_1-\lambda$ to be oscillatory or nonoscillatory does not depend
on the left endpoint of the interval $(a,\infty)$. The same applies for the
essential spectrum and the semi-boundedness. Therefore we can assume without
loss of generality that $a=e_n$ and, hence,  $u_0(x):= \sqrt{L_{n-1}(x)}$ is positive,
where we set $L_{-1}(x)=1$.

We choose $r_0(x):=r_\infty$, $p_0(x):=p_\infty$, $q_0(x) := q_\infty + p_\infty Q_n(x)$ and $\lambda:= \frac{q_\infty}{r_\infty}$. One verifies in the same way as in the proof of \cite[Corollary 2.3]{kt3} that $-u_0''+Q_n u_0=0$ and hence
\begin{equation*}
 \left(\tau_0  - \frac{q_\infty}{r_\infty}\right) u_0=0,\quad\text{where}\quad \tau_0=\frac{1}{r_\infty}\left(-\frac{d}{dx}p_\infty\frac{d}{dx}+q_\infty+ p_\infty Q_n\right).
\end{equation*}
It is clear that $u_0$ is the minimal positive solution
near $\infty$ and the solution $v_0$ given by d'Alemberts formula is
$$
v_0(x)= \frac{1}{p_\infty} \sqrt{L_{n-1}(x)} \int_{e_n}^x
\log_{n}'(t)\,dt = \frac{1}{p_\infty} \sqrt{L_{n-1}(x)} \log_{n}(x).
$$
Let $T_0$ be a self-adjoint realization of $\tau_0$ in $L^2((e_n,\infty))$ with
Dirichlet boundary conditions in $e_n$. From $q_0(x)=q_\infty + p_\infty Q_n(x)\geq q_\infty$
for $x\in (e_n,\infty)$ and $\lim_{x\rightarrow \infty} q_0(x)=q_\infty$ we conclude
	\begin{equation*}
		\sigma(T_0)= \sigma_{\mathrm{ess}}(T_0)=[q_\infty/r_\infty,\infty).
	\end{equation*}
By Theorem \ref{thm:nonoscingap2} $\tau_1$ is in limit point at $\infty$, $T_1$
is semibounded and \eqref{Farruko} holds.
For the function $\Delta$ in Theorem~\ref{thm:gu} we obtain
\begin{equation*}
 \begin{split}
  \Delta(x)
&=\frac{1}{p_\infty}\log_n(x)^2 L_{n-1}(x)\Bigg( L_{n-1}(x)\Big(q_1(x)-q_\infty - p_\infty Q_n(x)- \frac{q_\infty}{r_\infty}(r_1(x) -r_\infty)\Big)\\
& \qquad\qquad\qquad\qquad\qquad\quad + \big(p_\infty \sqrt{L_{n-1}(x)}'\big)^2 \frac{p_1(x)-p_\infty}{p_1(x)p_\infty}\Bigg)\\
&=L_{n}^2(x)\Bigg( \frac{q_1(x)}{p_\infty}-  Q_n(x)- \frac{q_\infty}{p_\infty r_\infty}r_1(x) \Bigg)\\
&\qquad\qquad +\log_n(x)^2 L_{n-1}(x)\left(\frac{1}{2\sqrt{L_{n-1}(x)}}L_{n-1}'(x)\right)^2\frac{p_1(x)-p_\infty}{p_1(x)}.
\end{split}
\end{equation*}
We use the formula $L_m'(x)=L_m(x)\sum_{j=0}^m L_j(x)^{-1}$ from \cite{kt3} and conclude
\begin{equation*}
 \Delta(x)  = L_{n}^2(x)\Bigg( \frac{q_1(x)}{p_\infty}-  Q_n(x)- \frac{q_\infty}{p_\infty r_\infty}r_1(x) \Bigg)+L_{n}^2(x)\Bigg( \frac{1}{2} \sum_{j=0}^{n-1}\frac{1}{L_j(x)}\Bigg)^2\frac{p_1(x)-p_\infty}{p_1(x)}.
\end{equation*}
Thus the function $\Delta$ in Theorem~\ref{thm:gu}
coincides with $\widetilde\Delta$.
Now the statements (i) and (ii) follow from Theorem \ref{thm:gu}
and \eqref{zuj}.
\end{proof}

For the special case $n=0$ Theorem~\ref{blaubaer} reduces to the following statement,
which extends the classical Kneser result from \cite{Kneser93} to the case
of non-constant coefficients $p_1$ and $r_1$.

\begin{corollary}\label{kneserli}
Assume that the limits in \eqref{BadBunny} exist in $\mathbb R$ such that $p_\infty>0$ and $r_\infty>0$. Then the following assertions hold:
	\begin{enumerate}
		\item If
		      \begin{equation*}
			      \limsup_{x\rightarrow \infty}{}
			      x^2\Big( \frac{q_1(x)}{p_\infty}-   \frac{q_\infty}{p_\infty r_\infty}r_1(x)\Big)
			      < -\frac{1}{4},
		      \end{equation*}
		      then $\sigma(T_1)\cap (-\infty, q_\infty/r_\infty)$ consists of infinitely many simple eigenvalues which accumulate at $q_\infty/r_\infty$;
		\item If
		      \begin{equation*}
			      \liminf_{x\rightarrow \infty}{}
			      x^2\Big( \frac{q_1(x)}{p_\infty}-   \frac{q_\infty}{p_\infty r_\infty}r_1(x)\Big)
			      > -\frac{1}{4},
		      \end{equation*}
		      then $\sigma(T_1)\cap (-\infty, q_\infty/r_\infty)$ consists of finitely many simple eigenvalues.
	\end{enumerate}
	\end{corollary}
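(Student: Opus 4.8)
The plan is to obtain Corollary~\ref{kneserli} as the special case $n=0$ of Theorem~\ref{blaubaer}, so the whole argument reduces to tracking how the general expression $\widetilde{\Delta}$ in \eqref{GritarII} collapses when $n=0$. First I would record the values of the auxiliary quantities at this lowest level of the iterated logarithm. Since $\log_0(x)=x$, the definition $L_n(x)=\prod_{j=0}^n \log_j(x)$ gives $L_0(x)=x$. The convention $\sum_{j=0}^{-1}\equiv 0$ forces $Q_0(x)=0$, and the same convention makes the finite sum $\sum_{j=0}^{n-1}1/L_j(x)$ empty, hence equal to $0$, when $n=0$.

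Next I would substitute these values into \eqref{GritarII}. The correction term $\frac{1}{4}\bigl(\sum_{j=0}^{n-1}1/L_j(x)\bigr)^2\bigl(1-p_\infty/p_1(x)\bigr)$ carries the only dependence of $\widetilde{\Delta}$ on $p_1$, and it vanishes identically because the sum is empty; thus $p_1$ enters the hypotheses of the corollary only through the requirement that the limit $p_\infty$ exist and be positive. What remains is
$$\widetilde{\Delta}(x)=x^2\Bigl(\frac{q_1(x)}{p_\infty}-\frac{q_\infty}{p_\infty r_\infty}\,r_1(x)\Bigr),$$
which is exactly the quantity appearing in parts (i) and (ii) of the corollary.

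Finally, I would invoke Theorem~\ref{blaubaer} directly. Under the standing assumption that the limits in \eqref{BadBunny} exist with $p_\infty,r_\infty>0$, that theorem already supplies the limit-point property at $\infty$, the semiboundedness of $T_1$, and the identity $\sigma_\mathrm{ess}(T_1)=[q_\infty/r_\infty,\infty)$. With the identification above, condition \eqref{kneserI} becomes the $\limsup$ hypothesis of part (i) and \eqref{kneserII} becomes the $\liminf$ hypothesis of part (ii), so the two conclusions about finitely versus infinitely many simple eigenvalues of $T_1$ below $q_\infty/r_\infty$ transfer verbatim.

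I do not anticipate any genuine obstacle: the statement is a pure specialization, and the only point requiring care is the bookkeeping of the empty-sum and $Q_0=0$ conventions, which is precisely what guarantees that the $p_1$-term drops out and that $L_0(x)^2=x^2$. The one thing worth double-checking is the consistency of the exponent appearing in \eqref{kneserII}; reading that hypothesis as $\liminf_{x\to\infty}\widetilde{\Delta}(x)>-\frac{1}{4}$ is what makes the reduction meaningful and is what matches the un-squared condition in part (ii) of the corollary.
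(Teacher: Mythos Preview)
Your reduction is exactly the paper's intended one: Corollary~\ref{kneserli} is stated as the special case $n=0$ of Theorem~\ref{blaubaer}, and your bookkeeping with $L_0(x)=x$, $Q_0=0$, and the empty sum (so that the $p_1$-term drops out) is precisely what is needed. Your remark about reading \eqref{kneserII} as $\liminf_{x\to\infty}\widetilde{\Delta}(x)>-\tfrac{1}{4}$ is also correct; the squared exponent there is a typo.
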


In the next corollary we impose an additional condition on the coefficient $p_1$ and obtain
from Theorem~\ref{blaubaer} for $n\geq 1$
simplified criteria for the
spectrum in $(-\infty, q_\infty/r_\infty)$ to be infinite or finite.

\begin{corollary}\label{cor333}
Assume that the limits in \eqref{BadBunny} exist in $\mathbb R$ such that $p_\infty>0$ and $r_\infty>0$, and let
	\begin{equation}\label{Aventura}
		p_1(x) = p_\infty +o\Big(\frac{x^2}{L_n(x)^2}\Big)
	\end{equation}
for some $n\in \mathbb N$.
	 Then the following assertions hold:
	\begin{enumerate}
		\item If
		      \begin{equation*}
			      \limsup_{x\rightarrow \infty}{}
			      L_{n}^2(x)\Bigg( \frac{q_1(x)}{p_\infty}-  Q_n(x)- \frac{q_\infty}{p_\infty r_\infty}r_1(x)\Bigg)
			      < -\frac{1}{4},
		      \end{equation*}
		      then $\sigma(T_1)\cap (-\infty, q_\infty/r_\infty)$ consists of infinitely many simple eigenvalues which accumulate at $q_\infty/r_\infty$;
		\item If
		      \begin{equation*}
			      \liminf_{x\rightarrow \infty}{}
			      L_{n}^2(x)\Bigg( \frac{q_1(x)}{p_\infty}-  Q_n(x)- \frac{q_\infty}{p_\infty r_\infty}r_1(x)\Bigg)
			      > -\frac{1}{4},
		      \end{equation*}
		      then $\sigma(T_1)\cap (-\infty, q_\infty/r_\infty)$ consists of finitely many simple eigenvalues.
	\end{enumerate}
	\end{corollary}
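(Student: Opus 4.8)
The plan is to derive the Corollary directly from Theorem~\ref{blaubaer} by showing that, under the additional hypothesis \eqref{Aventura}, the quantity $\widetilde\Delta(x)$ defined in \eqref{GritarII} differs from the expression
\[
 \Xi(x):=L_n(x)^2\Bigg(\frac{q_1(x)}{p_\infty}-Q_n(x)-\frac{q_\infty}{p_\infty r_\infty}\,r_1(x)\Bigg)
\]
occurring in the Corollary only by a term tending to $0$ as $x\to\infty$. Once this is verified, one has $\limsup_{x\to\infty}\widetilde\Delta(x)=\limsup_{x\to\infty}\Xi(x)$ and $\liminf_{x\to\infty}\widetilde\Delta(x)=\liminf_{x\to\infty}\Xi(x)$, so the hypotheses of assertions (i) and (ii) become equivalent to \eqref{kneserI} and \eqref{kneserII}, respectively, and both conclusions are inherited from Theorem~\ref{blaubaer}.

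To carry this out, first I would isolate the difference. Comparing \eqref{GritarII} with $\Xi$, the two agree except for the $p_1$-dependent contribution
\[
 R(x):=\tfrac14\,L_n(x)^2\Bigg(\sum_{j=0}^{n-1}\frac{1}{L_j(x)}\Bigg)^{\!2}\left(1-\frac{p_\infty}{p_1(x)}\right),
\]
so the whole matter reduces to proving $R(x)\to0$. This I would establish from two elementary estimates. For the logarithmic sum, note that for all sufficiently large $x$ one has $\log_k(x)\ge1$ for $1\le k\le n-1$, whence $L_j(x)=\prod_{k=0}^{j}\log_k(x)\ge L_0(x)=x$ for each $0\le j\le n-1$; consequently
\[
 \sum_{j=0}^{n-1}\frac{1}{L_j(x)}\le\frac{n}{x},\qquad\text{so}\qquad\Bigg(\sum_{j=0}^{n-1}\frac{1}{L_j(x)}\Bigg)^{\!2}=O\!\left(\frac{1}{x^2}\right).
\]
For the coefficient factor, since $p_1(x)\to p_\infty>0$ the function $1/p_1$ stays bounded near $\infty$, and \eqref{Aventura} gives $1-p_\infty/p_1(x)=(p_1(x)-p_\infty)/p_1(x)=o\!\left(x^2/L_n(x)^2\right)$.

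Multiplying the two estimates then yields
\[
 R(x)=L_n(x)^2\cdot O\!\left(\frac{1}{x^2}\right)\cdot o\!\left(\frac{x^2}{L_n(x)^2}\right)=o(1),
\]
as required. I do not expect any genuine obstacle here: the limit-point property, the semiboundedness, and the identification $\sigma_{\mathrm{ess}}(T_1)=[q_\infty/r_\infty,\infty)$ are already furnished by Theorem~\ref{blaubaer}, and the remainder is bookkeeping with iterated logarithms. The only point requiring a little care is the crude bound $L_j(x)\ge x$, which is precisely what lets the unbounded factor $L_n(x)^2$ be absorbed against the decay $o(x^2/L_n(x)^2)$ supplied by \eqref{Aventura}; without condition \eqref{Aventura} this cancellation would fail and $\widetilde\Delta$ could not be replaced by $\Xi$.
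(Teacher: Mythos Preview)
Your argument is correct and follows essentially the same route as the paper's proof: both reduce the Corollary to Theorem~\ref{blaubaer} by showing that the $p_1$-dependent remainder $R(x)$ in $\widetilde\Delta(x)$ tends to zero, combining the $O(1/x^2)$ decay of $\big(\sum_{j=0}^{n-1}1/L_j(x)\big)^2$ with the $o(x^2/L_n(x)^2)$ bound for $1-p_\infty/p_1(x)$ supplied by \eqref{Aventura}. The only cosmetic difference is that the paper records the slightly sharper asymptotic $\sum_{j=0}^{n-1}1/L_j(x)=1/x+o(1/x)$, whereas your crude bound $L_j(x)\ge x$ already suffices.
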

	\begin{proof}
	Assertions (i) and (ii) follow from Theorem~\ref{blaubaer} if we show that
	\begin{equation}\label{jabitte}
	\lim_{x\rightarrow\infty}L_{n}^2(x)\frac{1}{4}\bigg(\sum_{j=0}^{n-1}\frac{1}{L_j(x)}\bigg)^2\Big(1-\frac{p_\infty}{p_1(x)}\Big)=0.
	\end{equation}
	In fact, it is easy to see that
\begin{equation*}
 \sum_{j=0}^{n-1}\frac{1}{L_j(x)}=\frac{1}{x}+ o(1/x),\quad\text{and hence}\quad \bigg(\sum_{j=0}^{n-1}\frac{1}{L_j(x)}\bigg)^2=\frac{1}{x^2}+ o(1/x^2),
\end{equation*}
that is,
\begin{equation*}
 \Bigg(\sum_{j=0}^{n-1}L_j^{-1}\Bigg)^2=\frac{1}{x^2}+ w(x),\quad\text{where}\quad \lim_{x\rightarrow\infty} x^2 w(x) = 0.
\end{equation*}
Furthermore, from \eqref{Aventura} we conclude
\begin{equation*}
 1-\frac{p_\infty}{p_1(x)}=\frac{p_1(x)-p_\infty}{p_1(x)} =\frac{k(x)}{p_\infty+k(x)},\quad\text{where}\quad \lim_{x\rightarrow\infty}\frac{L_n(x)^2 k(x)}{x^2} = 0,
\end{equation*}
and therefore
\begin{equation*}
\lim_{x\rightarrow\infty} \frac{L_{n}^2(x)}{x^2}  \frac{k(x)}{p_\infty+k(x)}=0\quad\text{and}\quad
 \lim_{x\rightarrow\infty} L_{n}^2(x) w(x) \frac{k(x)}{p_\infty+k(x)}=0.
\end{equation*}
This implies \eqref{jabitte} and hence (i) and (ii) follow.
\end{proof}

As a last result in this context we formulate a variant of Theorem~\ref{thm:gu}, where the pointwise limits are replaced by averaged ones;
cf. \cite[Theorem 2.5]{kt3}.
We leave it to the reader to formulate further generalizations of the results in \cite{kt3} to the case $r_0\not= r_1$ by using the transformation
$q_j \to q_j - \lambda r_j$ from the proof of Theorem~\ref{thm:gu}.

\begin{theorem}
Suppose the same assumptions and the same notation as in Theorem~\ref{thm:gu}. Suppose, in addition, that the functions $\Delta$ and
and $\rho := (p_0 u_0 v_0)^{-1}$ are both bounded and $\rho$ satisfies $\rho=o(1)$ and $\frac{1}{\ell} \int_0^\ell \left|\rho(x+t) -\rho(x) \right| dt = o(\rho(x))$.

Then $\tau_1-\lambda$ is oscillatory if
\begin{equation*}
\inf_{\ell >0} \limsup_{x\to b} \int_x^{x+\ell} \Delta(t) dt < -\frac{1}{4}
\end{equation*}
and $\tau_1-\lambda$ is nonoscillatory if
\begin{equation*}
\sup_{\ell >0}  \liminf_{x\to b} \int_x^{x+\ell} \Delta(t) dt > -\frac{1}{4}.
\end{equation*}
\end{theorem}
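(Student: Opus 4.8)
The plan is to prove this exactly as Theorem~\ref{thm:gu}, namely by the transformation $q_j \mapsto q_j - \lambda r_j$ for $j=0,1$, which turns $\tau_j-\lambda$ into a differential expression carrying spectral parameter $0$. Since the spectral parameter is now zero, the weights $r_j$ no longer enter the equations $-(p_j u')' + (q_j-\lambda r_j)u = 0$, and we land precisely in the framework of the averaged Kneser-type criterion \cite[Theorem~2.5]{kt3}, which was formulated for a single weight. First I would record that all objects appearing in the statement are invariant under this transformation: the minimal positive solution $u_0$ of $(\tau_0-\lambda)u=0$ is the minimal positive solution of the transformed equation, the d'Alembert partner $v_0$ from \eqref{eq:dAl} is unchanged because it depends only on $p_0$ and $u_0$, and consequently $\rho=(p_0u_0v_0)^{-1}$ is unchanged as well.

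The one computation worth carrying out explicitly is the invariance of $\Delta$. Replacing $q_j$ by $q_j-\lambda r_j$ changes the difference $q_1-q_0$ into $(q_1-\lambda r_1)-(q_0-\lambda r_0)=q_1-q_0-\lambda(r_1-r_0)$, which is exactly the combination already appearing inside the brackets in \eqref{GritarI}; at the new spectral parameter $0$ the explicit $-\lambda(r_1-r_0)$ summand is absent, so the resulting function is literally the same $\Delta$ as before, and likewise the term $(p_0u_0')^2\frac{p_1-p_0}{p_1p_0}$ is untouched since it involves no weight. Hence the standing boundedness hypotheses on $\Delta$ and $\rho$, together with the averaging conditions $\rho=o(1)$ and $\frac1\ell\int_0^\ell|\rho(x+t)-\rho(x)|\,dt=o(\rho(x))$, coincide verbatim with the hypotheses imposed in \cite[Theorem~2.5]{kt3} for the transformed pair.

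It then remains to invoke \cite[Theorem~2.5]{kt3} directly. Because oscillation and nonoscillation of $\tau_1-\lambda$ are defined through the zeros of the modified Wronskian $W(u_0,u_1)=u_0(p_1u_1')-(p_0u_0')u_1$, which contains no weight, these notions are unaffected by the transformation; the averaged criteria $\inf_{\ell>0}\limsup_{x\to b}\int_x^{x+\ell}\Delta<-\tfrac14$ and $\sup_{\ell>0}\liminf_{x\to b}\int_x^{x+\ell}\Delta>-\tfrac14$ are stated in terms of the same $\Delta$ and therefore transfer unchanged. The main (and essentially only) point requiring care is thus the bookkeeping of the transformation and the verification that the transformed problem meets the additional standing assumptions of \cite[Theorem~2.5]{kt3}, in particular the $\Delta$-invariance established above; once this is in place the conclusion is immediate from \cite[Theorem~2.5]{kt3}, exactly in parallel with the proof of Theorem~\ref{thm:gu}.
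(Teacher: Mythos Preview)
Your proposal is correct and follows exactly the route the paper indicates: the theorem is stated without proof in the paper, with the preceding remark that it is a variant of Theorem~\ref{thm:gu} obtained from \cite[Theorem~2.5]{kt3} via the transformation $q_j\to q_j-\lambda r_j$. Your only slip is the aside that oscillation of $\tau_1-\lambda$ is ``defined through the zeros of the modified Wronskian''; it is defined through zeros of solutions, but your actual point---that the equation $-(p_1u')'+(q_1-\lambda r_1)u=0$ and hence its oscillatory behaviour are unaffected by the weight---is the correct one.
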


\end{document}